\documentclass[11pt,openany,leqno]{article}
\usepackage{amsmath,amsthm,amsfonts,amssymb,amscd,url}
\usepackage{graphics}
\usepackage[latin1]{inputenc}

\headsep=-1truecm \oddsidemargin=9pt \evensidemargin=8pt
\textwidth=16.5truecm \textheight=23truecm
\setlength{\parindent}{12pt} 
\usepackage{enumerate}
\setcounter{tocdepth}{1}
\begin{document}
\def\Diff{\text{Diff}}
\def\Max{\text{max}}
\def\Log{\text{log}}
\def\loc{\text{loc}}
\def\inta{\text{int }}
\def\det{\text{det}}
\def\exp{\text{exp}}
\def\Re{\text{Re}}
\def\lip{\text{Lip}}
\def\leb{\text{Leb}}
\def\dom{\text{Dom}}
\def\diam{\text{diam}\:}
\newcommand{\ovfork}{{\overline{\pitchfork}}}
\newcommand{\ovforki}{{\overline{\pitchfork}_{I}}}
\newcommand{\Tfork}{{\cap\!\!\!\!^\mathrm{T}}}
\newcommand{\whforki}{{\widehat{\pitchfork}_{I}}}
\theoremstyle{plain}
\newtheorem{theo}{\bf Theorem}[section]
\newtheorem{lemm}[theo]{\bf Lemma}
\newtheorem{sublemm}[theo]{\bf Sublemma}
\newtheorem{IH}[theo]{\bf Extra induction hypothesis}
\newtheorem{prop}[theo]{\bf Proposition}
\newtheorem{conj}[theo]{\bf Conjecture}

\newtheorem{coro}[theo]{\bf Corollary}
\newtheorem{Property}[theo]{\bf Property}
\newtheorem{Claim}[theo]{\bf Claim}
\newtheorem{Assertion}[theo]{\bf Assertion}

\theoremstyle{remark}
\newtheorem{rema}[theo]{\bf Remark}
\newtheorem{remas}[theo]{\bf Remarks}

\newtheorem{exem}[theo]{\bf Example}
\newtheorem{Examples}[theo]{\bf Examples}
\newtheorem{defi}[theo]{\bf Definition}
\title{On the inverse limit stability of endomorphisms}

\author{Pierre Berger,\\
\small{LAGA Institut Galilée, Université Paris 13, France}\\
Alvaro Rovella,\\\small{Facultad de Ciencias, Uruguay}}
\date{may 2010}

\maketitle
\begin{abstract}
We present several results suggesting that the concept of $C^1$-inverse limit stability is free of singularity theory.  We describe an example of a $C^1$-inverse stable endomorphism which is robustly transitive with persistent critical set. We show that every (weak) axiom A, $C^1$-inverse limit stable endomorphism satisfies a certain strong transversality condition $(T)$. We prove that every attractor-repellor endomorphism satisfying axiom A and Condition $(T)$ is $C^1$-inverse limit stable. The latter is applied to Hénon maps, rational functions of the sphere and others. This leads us to  conjecture that $C^1$-inverse stable endomorphisms are those which satisfy axiom A and the strong transversality condition $(T)$.
\end{abstract}

\section{Introduction}
There exists various concepts of stability for dynamical systems. When dealing with endomorphisms it makes sense to consider the inverse limit which is defined in the sequel. A \emph{$C^1$-endomorphism $f$} is a $C^1$-map of a manifold $M$ into itself, which is not necessarily bijective and which can have a nonempty \emph{singular set} (formed by the points $x$ s.t. the derivative $T_x f$ is not surjective).
The \emph{inverse limit set} of $f$ is the space of the full orbits $(x_i)_i \in M^\mathbb Z$ of $f$. The dynamics induced by $f$ on its inverse limit set is the shift. The endomorphism $f$ is \emph{$C^1$-inverse limit stable} if for every $C^1$ perturbation $f'$ of $f$, the inverse limit set of $f'$ is homeomorphic to the one of $f$ via a homeomorphism which conjugates both induced dynamics and close to the canonical inclusion.

When the dynamics $f$ is a diffeomorphism, the inverse limit set $M_f$ is homeomorphic to the manifold $M$. The $C^1$-inverse limit stability of $f$ is
then equivalent to the $C^1$-\emph{structural stability} of $f$: every $C^1$-perturbation of $f$ is conjugated to $f$ via a homeomorphism of $M$.

 A great work was done by many authors to provide a satisfactory description of $C^1$-structurally stable diffeomorphisms, which starts with Anosov, Smale, Palis \cite{PS}, de Melo, Robbin, and finishes  with Robinson \cite{Rs} and Ma\~{n}\'e \cite{Mane}. Such diffeomorphisms are those which satisfy axiom A and the strong transversality condition.

 Almost the same description was accomplished for $C^1$-structurally stable flows by Robinson and Hayashi. The inverse limit set of a flow is a one dimensional foliation. The structural stability of a flow is also equivalent to the $C^1$-inverse  stability. A flow $\phi$ is \emph{structurally stable} if the foliation induced by $\phi$ is equivalent to the foliation induced by its perturbation, via a homeomorphism of $M$ which is $C^0$-close to the identity.

The descriptions of the structurally stable maps for smoother topologies ($C^r$, $C^\infty$, holomorphic...) remain some of the hardest, fundamental, open questions in dynamics.

One of the difficulties occurring in the description of $C^r$-structurally stable smooth endomorphisms concerns the singularities. Indeed, a structurally stable map must display a stable singular set. But there is no satisfactory description of them in singularity theory.

This work suggests that the concept of inverse limit stability does not deal with singularity theory.

The concept of inverse limit stability is an area of great interest for semi-flows given by PDEs, although still at its infancy.
\newline

{\small
The work of the first author was done during stays at IHES (France), IMPA (Brasil) and Facultad de Ciencias (Uruguay). He is very grateful to these institutes for their hospitality.}

\section{Statement of the main results}

Let $f$ be a $C^1$-map of a compact manifold $M$ into itself.

The \emph{inverse limit} of $f$ is the set $M_F:= \{\underline x\in M^\mathbb Z :\; f(x_i)=x_{i+1}\;\forall i\in \mathbb Z\}$, where $M^\mathbb Z$ is the space of sequences $\underline x=(x_i)_{i\in\mathbb Z}$. The subset $M_F$ endowed with the induced product topology is compact. The map $f$ induces the shift map $F(\underline x)_i= x_{i+1}$.
We remark that $M_F$ is equal to $M$ and $F$ is equal to $f$ if $f$ is bijective. The \emph{global attractor} of $f$ is defined as $M_f=\cap_{n\ge 0} f^n(M)$.
For $j\in \mathbb Z$, let:
\[
\pi_j:\; \underline x\in M_F\mapsto  x_j\in M_f.
\]
 We note that:
\[
\pi_j\circ F= f\circ \pi_j.
\]
Also a point $z$ belongs to $M_f$ if and only if $\pi_0^{-1}(\{z\})$ is not empty.
Although $\pi_j$ depends on $f$, this will be not emphasized by an explicit notation.

Two endomorphisms $f$ and $f'$ are $C^1$-\emph{inverse limit conjugated}, if there exists a homeomorphism $h$ from $M_F$ onto $M_{F'}$, such that the following equality holds:
\[
h\circ F= F'\circ h.
\]
\begin{defi}
An endomorphism $f$ is $C^1$-\emph{inverse limit stable} or simply $C^1$ \emph{inverse stable} if every $C^1$-perturbation $f'$ of $f$ is inverse limit conjugated to $f$ via a homeomorphism $h$ which is $C^0$ close to the inclusion $M_F\hookrightarrow M^\mathbb Z$.
\end{defi}

Let $K_f$ be a compact, $f$-invariant subset of $M$ ($f(K_f)\subset K_f$). Then $K_f$ is \emph{hyperbolic} if there exists a section $E^s$ of the Grassmannian of $TM|K_f$ and $N>0$  satisfying for every $x\in K_f$:
\begin{itemize}
\item $T_{x}f(E^s(x))\subset E^s(f(x))$,
\item the action $[Tf]$ induced by $f$ on the quotients $T_x M/E^s(x)\rightarrow T_{f(x)} M/E^s(f(x))$ is invertible,
\item $\|T_{x} f^N|E^s(x)\|<1$,
\item $\|([Tf]^{N})^{-1}\|<1$.
\end{itemize}

We notice that actually $E^s(\underline{x})$ depends only on $x_0=\pi_0(\underline x)$. It can be denoted by $E^s(x_0)$.

 On the other hand,
there exists a unique continuous family $\big(E^u(\underline x)\big)_{\underline x}$ of subspaces $E^u(\underline x)\subset T_{x_0}M$, indexed by $\underline x\in K_F:=K_f^\mathbb Z\cap M_F$, satisfying:
 
\[
T_{x_0}f(E^u(\underline x))=E^u(F(\underline x))
\quad
\mathrm{and}\quad
E^u(\underline x)\oplus E^s(x_0)=T_{x_0}M.
\]

For $\epsilon>0$, the $\epsilon$-local stable set of $x\in K_f$ is:

\[
W^s_\epsilon(x;f)=\big\{y \in M:\; \forall i\ge 0,\;d(f^i(x),f^i(y))\le \epsilon,\;\mathrm{ and}\; d(f^i(x),f^i(y))\rightarrow 0,\; i\rightarrow +\infty\big\}.
\]

The $\epsilon$-local unstable set of $\underline x\in K_F$ is:

\[W^u_\epsilon(\underline{x};F)=\big\{\underline y\in M_F:\; \forall i\le 0,\;d(x_i,y_i)\le \epsilon,\;\mathrm{ and}\;
d(x_i,y_i)\rightarrow 0,\; i\rightarrow -\infty\big\}.\]

Let us justify why we have chosen $W^s_\epsilon(x)$ included in $M$ whereas $W^u_\epsilon(\underline{x})$ is included in $M_F$.
One can prove that (for $\epsilon$ small enough) the local stable set is a submanifold whose tangent space at $x$ equals $E^s(x_0)$; however its preimage $W^s_\epsilon(\underline x; f)$ by $\pi_0$ is in general not a manifold (not even a lamination in general).
The local unstable set is a manifold embedded into $M$ by $\pi_0$; its tangent space at $x_0$ is equal to $E^u(\underline x)$. In general the unstable manifold depends on the preorbit: the unstable sets of different orbits in $\pi^{-1}(x_0)$ are not necessarily equal.

An endomorphism satisfies \emph{(weak) axiom A} if the nonwandering set $\Omega_f$ of $f$ is hyperbolic and equal to the closure of the set of periodic points.

In this work, we do not deal with \emph{strong axiom A} endomorphisms which satisfy moreover that the action on each of the basic pieces of $\Omega_f$ is either expanding or injective. This stronger definition is relevant for structural stability \cite{Przy}, but it is conjectured below to be irrelevant for inverse stability.

We put $\Omega_F:= \Omega_f^\mathbb Z\cap M_F$. Actually if the $f$-periodic points are dense in $\Omega_f$ then the $F$-periodic points are dense in $\Omega_F$. For the sets of the form $\pi_N^{-1}(B(x,\epsilon))\cap \Omega_F$, with $x\in \Omega_f$, $\epsilon>0$ and $N\in \mathbb Z$, are elementary open sets of $\Omega_F$ and contain periodic points.

Also if  $\Omega_f$ is hyperbolic the restriction of $F$ to $\Omega_F$ is expansive. For the $\epsilon$ unstable manifold $W^u_\epsilon(\underline x)$ intersects $W^s_\epsilon(\underline x)$ at the unique point $\underline x$ since $\pi_0$ restricted to $W^u_\epsilon(\underline x)$ is a homeomorphism and $\pi_0(W^u_\epsilon(\underline x))$ intersects $W^s_\epsilon(\pi_0(\underline x))$ at the unique point $\pi_0(\underline x)$, for every $\underline x\in \Omega_F$.

\begin{defi} The dynamics $f$ satisfies the \emph{strong transversality condition} if:

For all  $n\ge 0$, $\underline x\in \Omega_F$ and $y\in \Omega_f$, the map $f^n$ restricted to $\pi_0\big(W^u_\epsilon (\underline x;F)\big)$ is transverse to $W^s_\epsilon(y;f)$. In other words, for every $z\in \pi_0\big(W^u_\epsilon (\underline x)\big)\cap f^{-n}\big(W^s_\epsilon(y)\big)$:
\begin{itemize}
\item[$(T)$]
\qquad \;\qquad$
T_zf^n\Big(T_z \pi_0 \big(W^u_\epsilon (\underline x)\big)\Big)+T_{f^{n}(z)} W^s_\epsilon (y)=T_{f^{n}(z)}M.$
\end{itemize}
\end{defi}
A first result is:
\begin{theo}
\label{th1}
Let $M$ be a compact manifold and $f\in C^1(M,M)$. If $f$ is $C^1$-inverse stable and satisfies axiom A, then the strong transversality condition holds for $f$.
\end{theo}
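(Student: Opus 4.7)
I would argue by contradiction, adapting the unfolding strategy used by Robinson in the diffeomorphism setting. Suppose $(T)$ fails at some $(\underline x,y,n,z)$. Pick $\underline z\in W^u_\epsilon(\underline x;F)$ with $\pi_0(\underline z)=z$, and let $\underline w:=F^n(\underline z)$, $w:=\pi_0(\underline w)=f^n(z)$. Writing $E^u(\underline z)$ for the tangent at $z$ of the local unstable leaf $\pi_0(W^u_\epsilon(\underline x;F))$, one has $T_zf^n\cdot E^u(\underline z)=E^u(\underline w)$, so the failure of $(T)$ reads
\[
E^u(\underline w)+T_wW^s_\epsilon(y;f)\;\neq\; T_wM.
\]
Inside $\Omega_f$ the splitting $E^u\oplus E^s=TM$ is total, so $w$ is automatically wandering; by passing to a forward iterate if necessary I may further assume $Df$ has maximal rank at $w$, and fix a small open ball $B\ni w$ disjoint from $\Omega_f$ and from all nontrivial iterates $f^i(w)$, $1\le|i|\le N$, any preassigned $N$.

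I then perturb $f$ inside $B$: in a chart set $f_\tau(x):=f(x)+\tau\rho(x)v$, with $\rho\equiv 1$ near $w$ and supported in $B$, and $f_\tau=f$ outside $B$. For small $|\tau|$, $f_\tau$ is a $C^1$-small perturbation, the nonwandering set $\Omega_{f_\tau}=\Omega_f$ is unchanged, and the continuations $\underline x_\tau$, $y_\tau$ are canonical. A local computation---using the equation $z\in W^s(y_\tau;f_\tau)\Leftrightarrow f_\tau(z)\in W^s(y_\tau;f_\tau)$ together with the graph-transform description of $\pi_0(W^u(\underline x_\tau;F_\tau))$---shows that, to first order in $\tau$, the local stable manifold $W^s(y_\tau;f_\tau)$ is translated near $w$ by $-\tau(Df_w)^{-1}v$ modulo $T_wW^s_\epsilon(y;f)$, while $\pi_0W^u(\underline x_\tau;F_\tau)$ near $w$ is affected only at order $o(\tau)$, since the preorbit $\underline w$ governing the unstable leaf leaves $B$ immediately. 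Choosing $v$ so that $(Df_w)^{-1}v\notin E^u(\underline w)+T_wW^s_\epsilon(y;f)$ and solving the resulting linear intersection problem, one obtains emptiness of $\pi_0W^u(\underline x_\tau;F_\tau)\cap W^s(y_\tau;f_\tau)\cap U$ for a fixed neighbourhood $U\ni w$ and some sign of every sufficiently small $\tau\neq 0$.

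Assume now $f$ is $C^1$-inverse stable. For those $\tau$ there is a homeomorphism $h_\tau:M_F\to M_{F_\tau}$ conjugating $F$ to $F_\tau$ and $C^0$-close to the inclusion $M_F\hookrightarrow M^\mathbb Z$. Since $h_\tau$ is continuous and intertwines the shifts, it preserves both backward- and forward-asymptotic behaviour and therefore sends unstable (resp.\ stable) sets of $F$ to unstable (resp.\ stable) sets of $F_\tau$; expansiveness of $F_\tau|\Omega_{F_\tau}$ combined with $C^0$-closeness forces $h_\tau(\underline x)=\underline x_\tau$ and identifies the image of the $y$-orbit with the $y_\tau$-orbit. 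Consequently $h_\tau(\underline w)\in W^u(F_\tau^n\underline x_\tau;F_\tau)$ and $\pi_0(h_\tau(\underline w))\in W^s(y_\tau;f_\tau)\cap U$, contradicting the emptiness established above.

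The decisive difficulty lies in the perturbation step: one must establish the asymmetric first-order response of the two manifolds to the local perturbation of $f$, the stable one at order $\tau$ and the unstable leaf only at order $o(\tau)$. Qualitatively this is plausible---$W^s$ depends on forward iteration of $f_\tau$ and feels the perturbation at once, whereas $\pi_0W^u$ is built by a graph transform over a preorbit that exits $B$ immediately---but quantifying it rigorously in the endomorphism setting, where $Df$ may drop rank away from $w$ and the unstable leaf depends genuinely on the choice of preorbit, is the main technical content of the proof.
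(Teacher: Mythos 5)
Your strategy is the classical Robinson-style ``unfold the tangency to kill the intersection,'' which is genuinely different from the argument in the paper, and the framework of the contradiction (perturb, invoke inverse stability, observe that the conjugacy must transport the heteroclinic point into the perturbed intersection) is set up correctly. However, the crucial perturbative step has a real gap that no amount of quantitative bookkeeping will repair, because it fails already at the level of the local picture.

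The failure of Condition $(T)$ at $z$ is a \emph{pointwise first-order} statement: the subspace $E^u(\underline w)+T_wW^s_\epsilon(y;f)$ is proper in $T_wM$. You translate $W^s_\epsilon(y;f_\tau)$ by (first order) $-\tau(Df_w)^{-1}v$ in a direction $\xi$ outside this subspace, leave the unstable leaf near $w$ untouched, and claim the intersection empties for a \emph{fixed} neighbourhood $U$ of $w$ and one sign of $\tau$. That conclusion does not follow from the tangent-space data alone. The two manifolds may be tangent only to first order and curve back towards each other: e.g.\ in $\mathbb R^3$ with $E^u=\{y=z=0\}$ and $W^s=\{y=0,\ z=g(x)\}$ where $g$ is $C^1$, $g(0)=g'(0)=0$ but $g$ changes sign on every neighbourhood of $0$ (a $C^1$-flat oscillation), translating $W^s$ by $(0,0,\tau)$ produces intersections with $E^u$ in $U$ for \emph{every} small $\tau$ of either sign. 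The condition $\xi\notin E^u(\underline w)+T_wW^s_\epsilon(y;f)$ does not exclude this $\xi$. Near $z$ the local unstable leaf is itself only $C^1$, so there is no a priori control of its second-order geometry that would let one choose $\xi$ and the sign of $\tau$ so as to push the stable manifold ``to the right side.'' In your own final paragraph you describe this as ``the main technical content,'' but it is not a quantitative refinement of an otherwise correct scheme; it is an obstruction intrinsic to the ``empty the intersection'' approach when the data are only $C^1$ and the tangency is arbitrary.

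The paper takes a route specifically chosen to avoid this difficulty. Instead of trying to make the intersection vanish, it produces \emph{two} perturbations $f'$ and $f''$, both equal to $f$ on $\Omega_f$: $f'$ is built (Claim~\ref{aclaim}(ii)) so that the non-transverse intersection is inflated to contain a submanifold of dimension $p>u+s-m$, and $f''$ is generic (Claim~\ref{aclaim}(i)) so that all the relevant intersections are transverse, i.e.\ of dimension $u+s-m$. Since both perturbations fix $\Omega_f$, expansiveness pins down the hyperbolic continuations, and inverse stability yields a conjugacy $\phi=h(f'')^{-1}\circ h(f')$ that fixes $\underline x$ and $\underline y$ and must therefore embed a $p$-dimensional manifold into a $(u+s-m)$-dimensional one, which is impossible. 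This dimension comparison uses only the first-order tangency, is insensitive to the curvature of the manifolds, and works with $C^1$ data; it is exactly designed to sidestep the gap in your argument. A smaller issue worth noting: you assume the ball $B\ni w$ can be taken disjoint from $\Omega_f$, but $w=f^n(z)\in W^s_\epsilon(y;f)$ can lie in $\Omega_f$ (e.g.\ $w=y$), since only $z\notin\Omega_f$ is guaranteed; the paper's construction, being based on perturbing the map near $z$ rather than near $w$, does not run into this.
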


The second one concerns the converse:
\begin{defi}
\label{AR}
An axiom A endomorphism is \emph{attractor-repeller} if $\Omega_f$ is the union of two subsets $R_f$ and $A_f$ such that there exist:
\begin{itemize}
\item a neighborhood $V_A$ of $A_f$ \emph{in $M$} satisfying $\bigcap_{n\ge 0} f^{n}(V_A)=A_f$,
\item a neighborhood $V_R$ of $R_f$ \emph{in $M_f$} satisfying $\bigcap_{n\ge 0} f^{-n}(V_R)=R_f$.
\end{itemize}
The set $R_f$ is called a \emph{repeller} and $A_f$ an \emph{attractor}.
\end{defi}

\begin{theo}
\label{th2}
Let $M$ be a compact manifold and $f\in C^1(M,M)$. If $f$ is an attractor-repeller endomorphism which satisfies the strong transversality condition, then $f$ is $C^1$-inverse stable.
\end{theo}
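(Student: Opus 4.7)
The plan is to build $h:M_F\to M_{F'}$ in three stages: construct a base conjugacy $h_\Omega:\Omega_F\to\Omega_{F'}$ by hyperbolic continuation and shadowing; extend it to product neighborhoods $U_A\supset A_F$ and $U_R\supset R_F$ in $M_F$; and propagate these local extensions through the shifts $F,F'$ to the whole of $M_F$. Strong transversality is used decisively only in the last stage, to glue the two resulting partial conjugacies.

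\textbf{Step 1} (base conjugacy). The trapping condition $\bigcap_n f^n(V_A)=A_f$ and its dual for $V_R\subset M_f$ are $C^1$-open, so for $f'$ close to $f$ the sets $A_{f'}:=\bigcap_n (f')^n(V_A)$ and $R_{f'}:=\bigcap_n (f')^{-n}(V_R)$ are well defined and hyperbolic by persistence of hyperbolicity, applied respectively to $f|A_f$ and to the shift $F|R_F$ on the inverse limit. Expansivity of $F|\Omega_F$ (noted in the excerpt) combined with the shadowing lemma yields the $C^0$-close conjugacy $h_\Omega$, and standard hyperbolic theory furnishes $C^1$-close continuations of the local leaves $W^s_\epsilon(x;f)$ and $W^u_\epsilon(\underline x;F)$.

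\textbf{Steps 2 and 3} (local extensions and propagation). Near $A_f$ the family $\{W^s_\epsilon(a;f)\}_{a\in A_f}$ is a continuous lamination of a neighborhood in $M$, and for each $\underline a\in A_F$ the embedded leaf $\pi_0 W^u_\epsilon(\underline a;F)$ is transverse to that lamination via the hyperbolic splitting. Every $\underline x\in M_F$ close enough to $A_F$ is thus determined by a pair $(a,\underline a)$ with $x_0\in W^s_\epsilon(a)\cap\pi_0 W^u_\epsilon(\underline a)$, giving a local product chart on a neighborhood $U_A\supset A_F$ in $M_F$; transferring these coordinates to the continued leaves for $f'$ defines a local conjugacy $h_A:U_A\to h_A(U_A)$ extending $h_\Omega|A_F$. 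A symmetric construction, using that in Definition \ref{AR} the stable direction of $R_f$ is taken inside $M_f$, yields $h_R:U_R\to h_R(U_R)$ extending $h_\Omega|R_F$. The attractor-repeller structure ensures that every $\underline x\in M_F$ eventually enters $U_A$ in forward time and $U_R$ in backward time, so for suitable $n_\pm\ge 0$ one sets
\[
h_{+}(\underline x):=(F')^{-n_+}h_A\bigl(F^{n_+}(\underline x)\bigr),\qquad h_{-}(\underline x):=(F')^{n_-}h_R\bigl(F^{-n_-}(\underline x)\bigr),
\]
both well defined on $M_F$, $F$-equivariant and independent of the choice of $n_\pm$ by the $F$-equivariance built into $h_A$ and $h_R$.

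\textbf{Main obstacle.} Proving $h_{+}=h_{-}$ is the single step at which $(T)$ is used. Both are $F'$-orbits in $M_{F'}$, so it is enough that a single coordinate agrees. The forward formula describes $\pi_{n_+}h_{+}(\underline x)$ as the point of the continued stable leaf through $\pi_0 h_A(F^{n_+}\underline x)$ in $M$; the backward formula describes the same coordinate as the point reached from $\pi_{-n_-}h_R(F^{-n_-}\underline x)$ by iterating the continued unstable leaf forward $n_++n_-$ times under $f'$. Condition $(T)$ says the unperturbed iterated unstable leaf $f^{n_++n_-}(\pi_0 W^u_\epsilon(\underline r;F))$ and the stable leaf $W^s_\epsilon(a;f)$ meet transversally at $x_{n_+}$ in $M$; transversal intersections being $C^1$-open and locally unique by the inverse function theorem, the continued leaves for $f'$ meet transversally at a single point near $x_{n_+}$, and both $h_{\pm}(\underline x)$ must project to it at coordinate $n_+$. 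Hence the two formulas agree, and continuity, injectivity, $C^0$-closeness to the inclusion, and the identity $h\circ F=F'\circ h$ follow from the corresponding properties of $h_A$ and $h_R$.
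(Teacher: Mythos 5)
Your outline differs from the paper's in a way that hides the places where condition $(T)$ is genuinely needed, and two of your three gluing ingredients are not constructible as described.

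\textbf{Where $(T)$ actually enters.} You assert that strong transversality is used ``decisively only in the last stage, to glue.'' In the paper's proof the decisive use of $(T)$ occurs much earlier, in the construction of the extended repeller $\hat R_f = W^s(R_f)\cap M_f = M_f\cap\bigcup_{n\ge0}f^{-n}(R_f)$ (Lemma~\ref{lem1}). The set $\hat R_f$ is the complement of the basin of $A_f$ inside $M_f$, hence compact, and $(T)$ in the form of Equation~(\ref{stc2}) is what allows one to pull back $E^s|R_f$ along $f^n$ to a continuous invariant stable sub-bundle on $\hat R_f$, making $\hat R_f$ hyperbolic with a local product structure. Without this, the ``stable direction of $R_f$ inside $M_f$'' you invoke for the symmetric construction of $h_R$ is not a lamination at all — $M_f$ is in general not a manifold, and the preimages $f^{-n}(W^s_\epsilon(R_f))\cap M_f$ only organize into a hyperbolic lamination thanks to $(T)$. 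Consequently your neighborhood $U_R$ with a product chart is not known to exist, and your claim that $(T)$ is only a gluing axiom is wrong.

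\textbf{The chart near $A_F$ is mis-indexed.} You parameterize a point $\underline x$ near $A_F$ by a pair $(a,\underline a)\in A_f\times A_F$ with $x_0\in W^s_\epsilon(a)\cap\pi_0 W^u_\epsilon(\underline a)$. But $A_F$ is an attractor of $F$, so $W^u_\epsilon(\underline a;F)\subset A_F$ for every $\underline a\in A_F$; the unstable plaques indexed by $A_F$ never leave $A_F$ and cannot sweep out a neighborhood of $A_F$ in $M_F$. A generic $\underline x$ near $A_F$ but off it lies on $W^u(\underline r;F)$ for an $\underline r\in\hat R_F$ far back along its orbit, not on $W^u$ of a point of $A_F$. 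This is exactly why the paper laminates $M_F\setminus\Omega_F$ by $\mathcal L_F=\mathcal L^u\pitchfork_{\pi_0}\mathcal L^s$, where $\mathcal L^u$ is the unstable lamination of the \emph{extended repeller} $\hat R_F$ and $\mathcal L^s$ is the stable lamination of $A_f$, and then treats $A_F$ and $\hat R_F$ as trivial $0$-dimensional strata glued continuously to $\mathcal L_F$.

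\textbf{The gluing $h_+=h_-$ is not established.} Even granting $h_A$ and $h_R$, conjugacies off the nonwandering set are highly nonunique (already for Axiom~A diffeomorphisms), and $h_A$, $h_R$ as you build them depend on independent choices (transversals/tubular neighborhoods near $A_F$ and near $R_F$). There is no reason their $F$-equivariant extensions should agree on the overlap, and the transversality argument you give — that both $h_\pm(\underline x)$ project onto the same transverse intersection point — presupposes that the pushed-forward and pulled-back leaf labels under $h_A$ and $h_R$ match, which is precisely what a gluing argument must prove, not assume. The paper avoids this entirely: the conjugacy is defined \emph{once}, by Lemma~\ref{unlemmedeconju}, on a fundamental domain $D_F$ for $W^u(\hat R_F)$, adjusted there so that $h_\#\circ F=F'\circ h_\#$ on $\partial^- D_F$; it is extended by equivariance to all of $\mathcal L_F$; and the only remaining delicacy is the continuity of the resulting $h$ at the two strata $A_F$ and $\hat R_F$, which is handled by separate arguments exploiting expansion along unstable leaves and contraction along stable leaves. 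Your two-sided scheme could be repaired only by reproducing this single fundamental-domain construction (which already requires $\hat R_f$), at which point it reduces to the paper's proof.
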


It follows immediately from the Theorem of Aoki-Moriyasu-Sumi in \cite{AMS} that:

If an endomorphism $f$ is $C^1$-inverse stable and has no singularities in the nonwandering set, then $f$ satisfies axiom A.

Hoping to generalize this result and Theorem \ref{th2}, we propose the
following  conjecture (vaguely written in \cite{Quandt-ano}):
\begin{conj}
The $C^1$-inverse stable endomorphisms are exactly those which satisfy axiom A and the strong transversality condition.
\end{conj}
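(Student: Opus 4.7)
The conjecture asks for the equivalence ``$C^1$-inverse stable $\iff$ axiom A $+$ $(T)$''. The reverse implication would generalize Theorem~\ref{th2} from attractor-repeller systems to arbitrary axiom A systems, while the forward implication combines Theorem~\ref{th1} with a strengthening of the Aoki--Moriyasu--Sumi theorem allowing singularities inside $\Omega_f$. Since neither direction is currently known, I describe a plan and point to the step I expect to be genuinely hard.

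For the reverse implication, the plan is to use Smale's spectral decomposition $\Omega_f=\bigsqcup_{i=1}^r \Omega_i$ together with an associated filtration $\emptyset=V_0\subset V_1\subset\cdots\subset V_r=M$ by compact sets with $f(V_i)\subset\text{int}(V_i)$, chosen so that $V_i\setminus V_{i-1}$ isolates the basic piece $\Omega_i$. I would then induct on $i$: each pair $(V_i,V_{i-1})$ plays, relative to the new piece $\Omega_i$, the role of an attractor-repeller extension of the previously built level, so a localized version of Theorem~\ref{th2} should yield an inverse-limit conjugacy over $V_i$ extending the one already constructed over $V_{i-1}$. Condition $(T)$ is precisely what guarantees that heteroclinic preorbits from higher to lower basic pieces are transverse and hence persist under $C^1$-perturbation, and what permits the local conjugacies to match across successive filtration levels. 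The key technical ingredient will be a $\lambda$-lemma for endomorphisms, formulated for preorbits in $M_F$ rather than forward orbits in $M$, so as to ensure $C^0$-closeness of the resulting conjugacy to the canonical inclusion $M_F\hookrightarrow M^{\mathbb Z}$.

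For the forward implication, Theorem~\ref{th1} reduces the problem to proving that $C^1$-inverse stability alone implies axiom A. The Aoki--Moriyasu--Sumi result already handles the case where $\Omega_f$ contains no singularities, so the new content is to extend it to endomorphisms whose nonwandering set meets the critical set. I would transport the standard closing-lemma and hyperbolicity-creation arguments from $M$ to $M_F$, on which $F$ is a genuine homeomorphism, use expansiveness on $\Omega_F$ and shadowing of pseudo-preorbits to produce an $F$-invariant dominated splitting over $\Omega_F$, and then show that at any singular periodic $x\in\Omega_f$ inverse stability forces $\ker T_xf\subset E^s(x)$. Heuristically, if $\ker T_xf\not\subset E^s(x)$ one should be able to perturb $f$ so that the critical value sweeps transversely across a nearby local stable manifold, producing in the perturbed inverse limit a full preorbit which admits no counterpart for $f$ close to the canonical inclusion.

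The main obstacle is exactly this kernel-alignment statement. The paper's robustly transitive example shows that singularities in $\Omega_f$ genuinely do occur for $C^1$-inverse stable endomorphisms, so the inclusion $\ker T_xf\subset E^s(x)$ is a nontrivial dynamical constraint rather than a hypothesis one can dispose of cheaply, and no general mechanism is presently known to extract it from inverse stability alone. I expect the filtration-plus-induction argument on the sufficient side to be laborious but essentially routine once a preorbit $\lambda$-lemma is in hand; it is the dynamical meaning of $C^1$-inverse stability at a persistent critical point lying inside the nonwandering set that, in my view, constitutes the deep open issue around which the conjecture really turns.
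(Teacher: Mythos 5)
This statement is, in the paper, a \emph{conjecture}: the authors explicitly leave it open, and the body of the paper establishes only the two partial results you cite (Theorem~\ref{th1}: inverse stable $+$ axiom A $\Rightarrow$ $(T)$; Theorem~\ref{th2}: attractor--repeller $+$ $(T)$ $\Rightarrow$ inverse stable), together with the remark that Aoki--Moriyasu--Sumi handles the nonsingular case of the forward direction. You correctly recognize this, you do not overclaim, and your inventory of what is known versus open matches the paper's own reduction of the problem. There is therefore no ``paper's proof'' to compare against, and your text is honestly a research plan rather than a proof; as such it is a reasonable answer.

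Two substantive remarks on the plan itself. First, for the sufficiency direction your filtration-by-attractor-repeller scheme is the natural generalization of Theorem~\ref{th2}, but the genuinely delicate point is not inducting on the filtration so much as matching the boundary data on fundamental domains between successive levels: in the attractor--repeller case the paper achieves this with Lemma~\ref{unlemmedeconju}, whose construction (tubular neighborhood of the lamination $\mathcal L_F$, interpolation $h_2$ via a bump function, projection back into $\mathcal L_{F'}$) is tied to there being exactly one repeller and one attractor. With several basic pieces you would need to glue such local conjugacies along chains of heteroclinic intersections, and Condition~$(T)$ alone does not obviously give the compatibility of the resulting $\mathcal L^s$ and $\mathcal L^u$ laminations across intermediate pieces. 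You should make this the flagged hard step rather than the $\lambda$-lemma for preorbits, which already appears implicitly in the proof of Lemma~\ref{lem1}.

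Second, a technical correction on the necessity direction. With the paper's definition of hyperbolicity, the condition $\ker T_xf\subset E^s(x)$ is \emph{automatic}: hyperbolicity requires the quotient action $[Tf]:T_xM/E^s(x)\to T_{f(x)}M/E^s(f(x))$ to be invertible, which forces the kernel of $T_xf$ to lie in $E^s(x)$. So ``kernel alignment'' is not an extra dynamical constraint one extracts after obtaining hyperbolicity; it is built into the definition. What is genuinely open is producing the hyperbolic splitting on $\Omega_f$ at all when the singular set meets $\Omega_f$, i.e.\ extending the closing-lemma machinery of Aoki--Moriyasu--Sumi past singularities. Your heuristic --- perturb so that a critical value sweeps across a stable manifold and watch a preorbit with no nearby counterpart appear --- is a sensible mechanism to destroy inverse stability when no dominated structure exists, but you should reframe the target as ``inverse stability forces a dominated and eventually hyperbolic splitting of $TM|\Omega_F$'' rather than as a statement about the kernel, which would simply follow once hyperbolicity is established.
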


\subsection{Application of Theorem \ref{th1}}

\begin{exem}[Rational functions] Let $f$ be a rational function of the Riemann sphere. Let us suppose that all its critical points belong to basins of attracting periodic orbits, or equivalently that its Julia set is expanding. By Theorem \ref{th2}, $f$ is $C^1$-inverse stable.
Note that $C^1$-perturbations of $f$ may have very wild critical set.
See \cite{Lyu} for a nice geometrical description of the inverse limit of $f$.\end{exem}

\begin{exem}[One-dimensional dynamics and Henon maps]\label{onedim}
Kozlovski-Shen-van Strien showed that a $(C^\infty)$-generic map $f$ of the circle $\mathbb S^1$ is attractor-repeller  (\cite{Shen}), and so  $C^1$-inverse limit stable, by Theorem \ref{th2}.

Let $f'(\theta,y)=(f(\theta)+y,0)$ be defined on the 2-torus $\mathbb T^2$ which enjoys of a canonical Abelian group structure.
%
%
Aside finitely many attracting periodic points, the nonwandering set of $f'$ consists of an expanding compact set of $f$ times $\{0\}$. This product $R$ is a hyperbolic set for $f'$ and a repeller (for the restriction of $f'$ to $M_{f'}$), as stated in definition \ref{AR}. It follows that $f'$ satisfies the requirements of Theorem \ref{th2}. This implies that if $g\in C^1(\mathbb T^2, \mathbb R)$ is close to $0$, then the inverse limits sets of $f$ and of the map:
\[
(\theta, y)\mapsto\big(f(\theta)+y,g(\theta,y)\big), 
\]
are conjugated.

For instance, take $f(x)=x^2+c$ with $c\in (-2,1/4)$ attractor-repellor on the one-point compactification of $\mathbb R$. The infinity is an attracting fixed point with basin bounded by the positive fixed point $p$ of $f$ and its preimage. Let $\rho$ be a smooth function with compact support in $\mathbb R$ and equal to 1 on a neighborhood of $[-p,p]$.

 We get that for $b$ small enough, the attractor of the Hénon map $(x,y)\mapsto (x^2+c+y, bx)$ of $\mathbb R^2$, equals to the one of $(x,y)\mapsto (x^2+c+y, \rho(x) \cdot b\cdot x)$ without the basin of $(\infty,0)$, is conjugated to the inverse limit of $f|[-p,p]$. 
 
 The same example works with $f$ a hyperbolic rational function of the sphere. This generalizes many results in this direction to the wide $C^1$-topology (see \cite{Hub} which contains other references).
 \end{exem}

\begin{exem}[Anosov endomorphisms with persistent critical set]
Przytycki  showed that an Anosov endomorphism without singularities is inverse stable \cite{Przy}. Latter Quandt  generalized this for Anosov endomorphisms, possibly with singularities \cite{Quandt-ano}. These results are consequences of Theorem \ref{th2}.

The simplest known example of Anosov endomorphisms are action of linear maps on the quotient $\mathbb R^2/\mathbb Z^2$, for instance:
\[A=\left[ \begin{array}{cc}n&1\\
1& 1\end{array}
\right],\quad n\in\{2,3,\dots\}.\]

A constant map is a trivial example of an Anosov endomorphism. Let us construct an example of Anosov map whose singular set is persistently nonempty and whose nonwandering set is the whole manifold.

Begin with a linear map $A$ of the plane as above. Close to the fixed point one can use linear coordinates to write the map as

\[\left[ \begin{array}{cc}\lambda&0\\
0& \mu\end{array}
\right],\]
where $0<\mu<1$ and $\lambda>1$. Let $\epsilon$ be a positive constant and let $\Psi$ be a nonnegative smooth function such that $\Psi(0)=1$ and $\Psi(x)=0$ for every $|x|>\epsilon$. Assume also that $\Psi$ is an even function having a unique critical point in $(-\epsilon,\epsilon)$. Let $\varphi$ be the $C^1$ function defined by:
$\varphi(y)=0$ for every $y\notin [0,\epsilon]$ and $\varphi'(y)=\sin (\frac{2\pi y}{\epsilon})$ for $y\in [0,\epsilon]$. Let $f$ be the $C^1$-endomorphism of the torus equal to
\[f(x,y)=\big(\lambda x,\mu y-\Psi(x)\varphi(y)\big)\]
on the $2\epsilon$-neighborhood of $0$ and to $A$ off.

Let $g$ be the real function $y\mapsto \mu y-\phi(y)$. There are regular points with different numbers of $g$-preimages. The same occurs for $f$. Consequently $f$ has a singular set which is persistently nonempty.%

We remark that $f$ is Anosov. For the $A$-stable direction is still preserved and contracted; the action of $Tf$ on the stable foliation normal bundle is still $\lambda$-expanding.

Moreover the stable leaves are irrational lines of the torus. From this it comes that given nonempty open sets $U$ and $V$,  $f^{-k}(U)$ contains a sufficiently long segment of such lines to intersect $V$ for every $k$ large. In other words, $f$ is mixing.
\end{exem}

\begin{exem}[Products]
Theorem \ref{th2} shows also the inverse stability of product of an Anosov endomorphism with an attractor-repeller endomorphism.
\end{exem}

\begin{exem}[Ma\~{n}\'e-Pugh \cite{MP}]
Ma\~{n}\'e and Pugh gave an example of $C^1$-$\Omega$-stable endomorphism for which the singular set persistently intersects an attracting basic piece. Their example is clearly not $C^r$-structurally stable but according to Theorem \ref{th2} it is $C^1$-inverse-limit stable
\end{exem}

\section{Proof of Theorem $\ref{th1}$}

We begin this section with two well known facts of transversality theory.

\begin{Claim}\label{aclaim}Let $N_1$ and $N_2$ be two embedded submanifolds of $M$.
\begin{enumerate}
\item[$(i)$]
The set of maps $f\in C^1(M,M)$ such that $f^n|N_1$ is transverse to $N_2$ for every $n\ge 1$ is residual.
\item[$(ii)$]
If $f$ is a $C^1$ map and $f{|N_1}$ is not transverse to $N_2$, then there exists a $C^1$ perturbation $f'$ of $f$ such that $f'^{-1}(N_2)\cap N_1$ contains a submanifold whose codimension is less than the sum of the codimensions of $N_1$ and $N_2$.

\end{enumerate}
\end{Claim}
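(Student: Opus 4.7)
The plan is to treat the two parts separately. For $(i)$, the standard route is Abraham's parametric transversality theorem applied, for each fixed $n\ge1$, to the evaluation map
\[
\mathrm{ev}_n\colon C^1(M,M)\times N_1\to M,\qquad(f,x)\mapsto f^n(x).
\]
A bump-function perturbation $\delta f$ of $f$ supported in a small ball about $f^{n-1}(x)$ disjoint from the earlier iterates $f^i(x)$, $i<n-1$, yields $\partial_f\mathrm{ev}_n(f,x)(\delta f)=\delta f(f^{n-1}(x))$, hence surjects onto $T_{f^n(x)}M$; pre-periodic configurations that prevent such disjointness are dispatched by choosing the support near a different iterate. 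Thus $\mathrm{ev}_n$ is transverse to $N_2$, and Abraham's theorem supplies a residual set $T_n\subset C^1(M,M)$ on which $f^n|N_1$ is transverse to $N_2$. Writing $N_1$ and $N_2$ as countable unions of relatively compact pieces expresses $T_n$ as a countable intersection of open dense sets, so $\bigcap_{n\ge 1}T_n$ is again residual.

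For $(ii)$, fix $x_0\in N_1$ with $y_0:=f(x_0)\in N_2$ at which transversality fails, so that $Tf_{x_0}(T_{x_0}N_1)+T_{y_0}N_2\subsetneq T_{y_0}M$. Choose a chart around $y_0$ in which $N_2$ is a coordinate subspace, and let $\pi$ denote the projection onto the complementary coordinate subspace, so $N_2=\pi^{-1}(0)$ locally and $\dim(\mathrm{image}\,\pi)=\mathrm{codim}(N_2)$. Set $k:=\mathrm{rank}\,D(\pi\circ f|N_1)_{x_0}$, strictly less than $\mathrm{codim}(N_2)$ by non-transversality. Let $W\subset\mathrm{image}\,\pi$ be the image of this derivative and $W'$ a linear complement, so that $\pi\circ f$ splits as $\phi_W\oplus\phi_{W'}$ with $\phi_{W'}(x_0)=0$ and $D(\phi_{W'}|N_1)_{x_0}=0$. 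Fix a tubular retraction $\sigma$ of a neighborhood of $N_1$ onto $N_1$, set $\tilde\phi_{W'}:=\phi_{W'}|N_1\circ\sigma$, and note $D\tilde\phi_{W'}(x_0)=D(\phi_{W'}|N_1)_{x_0}\circ D\sigma_{x_0}=0$. Picking a bump $\rho$ with $\rho\equiv1$ on $B_{r/2}(x_0)$ and support in $B_r(x_0)$, define $f'$ by subtracting $\rho\,\tilde\phi_{W'}$ from the $W'$-component of $f$ and leaving the other components unchanged. Continuity of $Df$ together with $D\tilde\phi_{W'}(x_0)=0$ yields a modulus $\delta(r)\to 0$ bounding $\|D\tilde\phi_{W'}\|_{C^0(B_r)}$, hence $\|\tilde\phi_{W'}\|_{C^0(B_r)}\le r\delta(r)$, which absorbs the $O(1/r)$ blow-up of $\|D\rho\|$; so $\|\rho\,\tilde\phi_{W'}\|_{C^1}=O(\delta(r))$ and $f'$ is an arbitrarily small $C^1$-perturbation. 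Near $x_0$, $\pi\circ f'|N_1$ takes values in $W$ and its $W$-component still has surjective derivative, so the implicit function theorem provides a local submanifold $S\subset N_1$ of codimension $k$ on which $\pi\circ f'$ vanishes. Then $S\subset N_1\cap(f')^{-1}(N_2)$ has codimension $\mathrm{codim}(N_1)+k<\mathrm{codim}(N_1)+\mathrm{codim}(N_2)$ in $M$, as required.

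The principal difficulty is the $C^1$-smallness of the perturbation in $(ii)$: because $f$ is only of class $C^1$, Taylor expansions are unavailable and one must extract a modulus of continuity from $Df$ near $x_0$ to cancel the $O(1/r)$ growth of the bump's derivative. In $(i)$ the only real technicality is the treatment of pre-periodic iterates, which is handled by choosing the perturbation support to avoid collisions.
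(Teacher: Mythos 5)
The paper itself gives no proof of Claim~3.1; it simply labels both items as ``two well known facts of transversality theory,'' so there is nothing in the text to compare against. Judged on its own merits, your argument for item~$(ii)$ is correct and well organized: splitting $\mathrm{image}\,\pi$ as $W\oplus W'$, killing the $W'$-component along $N_1$ by a $C^1$-small bump, and extracting the modulus of continuity of $Df$ at $x_0$ to absorb the $O(1/r)$ growth of $\|D\rho\|$ is exactly the delicate point in the $C^1$ category, and you handle it cleanly. The resulting zero set $S\subset N_1\cap(f')^{-1}(N_2)$ has codimension $\mathrm{codim}(N_1)+k$ in $M$ with $k<\mathrm{codim}(N_2)$, which is what is claimed.

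There is, however, a genuine gap in item~$(i)$. Abraham's parametric transversality theorem needs the evaluation map to be $C^r$ with $r>\max\bigl(0,\dim N_1-\mathrm{codim}(N_2)\bigr)$, because the residuality conclusion rests on Sard's theorem applied to the projection $\mathrm{ev}_n^{-1}(N_2)\to C^1(M,M)$. When you only have $f\in C^1$, the evaluation is only $C^1$, so the hypothesis fails whenever $\dim N_1+\dim N_2>\dim M$. That is exactly the regime relevant to the paper (there $N_1$ is an unstable disk of dimension $u$ and $N_2$ a stable disk of dimension $s$ with $u+s\geq m$), so one cannot invoke Abraham's theorem directly in $C^1(M,M)$. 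The standard repair is to prove openness and density separately: openness of the transversality condition is elementary on a compact piece of $N_1$; for density, first $C^1$-approximate $f$ by a $C^\infty$ map (losing nothing since transversality is $C^1$-open), then apply Thom/Abraham in the $C^\infty$ category where Sard's theorem is available. Your treatment of pre-periodic orbits is also only gestured at; the careful version must decompose $N_1$ according to the coincidence pattern of the finite orbit segment and perturb locally on each stratum, much as in the Kupka--Smale argument. Neither of these fixes is hard, but as written the use of the parametric transversality theorem in $C^1$ is not legitimate.
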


Let $f$ be a $C^1$-inverse stable endomorphism satisfying axiom A. For each small perturbation $f'$ of $f$, let $h(f')$ be the conjugacy between $M_F$ and $M_{F'}$.

We will assume by contradiction that the transversality condition fails to be true. This means that there exist $n\geq 0$,
$\underline x\in \Omega_F$, $y\in \Omega_f$ and $z\in \pi_0\big(W^u_\epsilon(\underline x;F)\big)\cap f^{-n}\big(W^s_\epsilon (y;f)\big)$ such that Equation $(T)$ does not hold.
Note first that $z\notin \Omega_f$, by hyperbolicity of the nonwandering set.\\

 Moreover, by density of periodic orbits in $\Omega_F$ after replacing $f$ by a perturbation,
 we can assume that $\underline x$ and $y$ are periodic points.  To simplify the calculations, we can suppose that $\underline x$ and $y$ are fixed points by considering an iterate of $f$.

The conjugacy $h(f')$ was asked to be close to the inclusion of $M_F$ into $M^\mathbb Z$.
By expansiveness of $\Omega_F$, if a perturbation $f'$ is equal to $f$ at the nonwandering set $\Omega_f$, then $h(f')$ is equal to the inclusion of $\Omega_F$. We will produce perturbations $f'$ and $f''$ of $f$ that are equal to $f$ on  $\Omega_f$.

The second item of  Claim \ref{aclaim} can be used to produce a perturbation $f'$ of $f$ such that
\[
f'^{-n}\big(W^s_\epsilon(y;f')\big)\cap \pi_0\big(W^u_{\epsilon}(\underline x;F')\big)
\]
contains a submanifold of dimension $p>u+s-m$, where $u$ is the dimension of $\pi_0(W^u_\epsilon(\underline x))$, $s$ is the dimension of $W^s_\epsilon(x)$ and $m$ the dimension of the manifold $M$.

On the other hand, the first item of Claim \ref{aclaim} implies that for generic perturbations $f''$ of $f$, the restriction of $(f'')^k$ to $\pi_0(W^u_\epsilon(\underline x;F''))$ is transverse to $W^s_\epsilon(y;f'')$ for every positive integer $k$.

If $\epsilon>0$ is sufficiently small, the maps $f'$ and $f''$ are injective restricted to the closures of $\pi_0\big(W^u_\epsilon(\underline x;f')\big)$ and $\pi_0\big(W^u_\epsilon(\underline x;f'')\big)$ respectively. This implies that the restrictions of $\pi_0$ to the closures of $W^u_\epsilon(\underline x;f')$ and $W^u_\epsilon(\underline x;f'')$ are homeomorphisms onto their images.

For $\underline y\in \pi_0^{-1}(y)$, note that $\pi_0^{-1}(W^s(y;f'))$ and $\pi_0^{-1}(W^s(y;f''))$ are equal to the stable sets $W^s(\underline y;F')$ and $W^s(\underline y;F'')$ respectively.

Consequently $A':=W^s(\underline y;F')\cap W^u_\epsilon(\underline x;F')$ contains a manifold of dimension $p$ whereas
$A'':= W^u_{\epsilon}(\underline x;F'')\cap W^s(y;F''))$ is a (possibly disconnected) manifold of dimension $u+s-m<p$.

We assumed that $f$ is inverse stable, so the map $\phi:= h(f'')^{-1}\circ h(f')$ is a conjugacy between $F'$ and $F''$ which fixes $\underline y$ and $\underline x$. Thus $\phi$ must embed $A'$ into 
$W^u(\underline x; F'')\cap W^s(\underline y, F'')=\cup_{n\ge 0} F''^n(A'')$.

As $F''$ is homeomorphism, a manifold of dimension $p$, contained in $A'$, is embedded by $\phi$ into the manifold $\cup_{n\ge 0} F''^n(A'')$ of dimension less than $p$. This is a contradiction.

\section{Proof of Theorem $\ref{th2}$}

\subsection{General properties on axiom A endomorphisms}
Let us first remark that $\Omega_F:= \Omega_f^\mathbb Z\cap M_F$ is also the nonwandering set of $F$. Indeed, an elementary open set $U_F$ of $M_F$ has the form $(\prod_{i< N} M \times U\times \prod_{i>N} M)\cap M_F=\prod_{n\in \mathbb Z} f^{n-N}(U)$, where $U$ is an open set in $M$. Therefore $F^n(U_F)$ intersects $U_F$ for $n>0$, iff $f^n(U)$ intersects $U$.

The density of the periodic points in a compact hyperbolic set $K\subset M$ is useful to have a \emph{local product structure}, that is for every $\underline x, \underline y\in K_F=K^\mathbb Z\cap M_F$ close, the set $W^u_\epsilon(\underline x; F)$ intersects  $W^s_\epsilon(\underline x; F)$ at a unique point $[\underline x,\underline y]$ which \emph{belongs} to $K_F$.

 \begin{lemm} If the periodic points are dense in a compact hyperbolic set $K$, then $K_F$ has a local product structure.\end{lemm}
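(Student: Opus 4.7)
I would produce the bracket $[\underline x,\underline y]$ in two stages: first as a point of $M_F$, using only the hyperbolic splitting on $K_F$, and then promote its membership to $K_F$ via the density of periodic orbits. Only the second stage uses the hypothesis on periodic points.

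For the first stage, I would realize the bracket as the lift of a transverse intersection in $M$. Since $\pi_0|W^u_\epsilon(\underline x;F)$ is a homeomorphism onto a $C^1$-submanifold tangent at $x_0$ to $E^u(\underline x)$, and $W^s_\epsilon(y_0;f)$ is a $C^1$-submanifold tangent at $y_0$ to $E^s(y_0)$, and $E^u(\underline x)\oplus E^s(x_0)=T_{x_0}M$ at a common base point, continuity of these splittings on $K_F$ makes the transversality persist for $\underline x,\underline y$ close. For $\epsilon$ small the two submanifolds cross at a unique point $z$ near $x_0\approx y_0$; lifting $z$ through the homeomorphism $\pi_0|W^u_\epsilon(\underline x;F)$ yields a unique $\underline z\in W^u_\epsilon(\underline x;F)$, and the identity $W^s_\epsilon(\underline y;F)=\pi_0^{-1}(W^s_\epsilon(y_0;f))$ (membership in the $F$-stable set of $\underline y$ depends only on $y_0$) places $\underline z$ in $W^s_\epsilon(\underline y;F)$ as well. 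Uniqueness and continuity of $(\underline x,\underline y)\mapsto\underline z$ follow from the transverse intersection downstairs.

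For the second stage, I would approximate $\underline x,\underline y$ by periodic $\underline p_n,\underline q_n\in K_F$, using that density of $f$-periodic points in $K$ lifts to density of $F$-periodic points in $K_F$ (already observed earlier in the paper). Continuity of the bracket then reduces the problem to proving $[\underline p,\underline q]\in K_F$ for close periodic $\underline p,\underline q\in K_F$. For this I would apply a closing / shadowing argument to the expansive hyperbolic homeomorphism $F|K_F$: form periodic $\delta$-pseudo-orbits that alternate long segments of the orbits of $\underline p$ and $\underline q$ joined by single-step jumps of size $O(d_F(\underline p,\underline q))$, extract shadowing true periodic orbits of $F$ (whose zero-th coordinates are $f$-periodic, hence in $K$), and let the block lengths grow. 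Uniqueness of hyperbolic shadowing together with the uniform transversality of the local unstable and stable laminations forces the transition portion of the shadowing orbit to approach $[\underline p,\underline q]$, so the latter lies in the closure of $F$-periodic orbits, i.e.\ in $K_F$.

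The main obstacle is this final claim — that the shadowing periodic orbits accumulate on the bracket point rather than only on the two periodic orbits individually. This is the delicate part of the argument and rests on the standard combination of expansiveness of $F|K_F$ with the uniform hyperbolic transversality of local stable and unstable manifolds; in spirit it is the usual ``hyperbolic set with dense periodic orbits is locally maximal'' principle, here adapted to the inverse limit of an endomorphism. Once it is in place, closedness of $K_F$ together with continuity of the bracket finishes the proof.
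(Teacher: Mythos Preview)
Your first stage---existence and uniqueness of the bracket in $M_F$ via the transverse intersection downstairs and the homeomorphism $\pi_0|W^u_\epsilon(\underline x;F)$---is exactly what the paper does.

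For the second stage the paper takes a much shorter route than your shadowing/closing scheme. Having approximated $\underline x,\underline y$ by periodic $\underline x',\underline y'$, the paper notes that not only does $\pi_0 W^u_\epsilon(\underline x';F)$ meet $W^s_\epsilon(y'_0;f)$ at a point $z'$, but---by the same transversality, since $\underline x',\underline y'$ are close to each other---$\pi_0 W^u_\epsilon(\underline y';F)$ also meets $W^s_\epsilon(x'_0;f)$. This heteroclinic cycle between the two periodic orbits makes $z'$ nonwandering immediately: iterates of any neighbourhood of $z'$ follow the cycle and return. Then $z=\lim z'$ is nonwandering, hence lies in $K=\Omega_f$, and its lift $[\underline x,\underline y]$ lies in $K_F$. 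No pseudo-orbits, no limiting procedure on shadowing orbits, and in particular no need to confront the ``main obstacle'' you identify.

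Your approach can be made to work, but two points deserve care. First, the inference ``zero-th coordinates are $f$-periodic, hence in $K$'' tacitly assumes every periodic point of $f$ lies in $K$, i.e.\ essentially $K=\Omega_f$; the paper's proof makes the same tacit assumption when it concludes ``nonwandering'', so the lemma as stated is really meant for $K=\Omega_f$. Second, in the paper's logical order shadowing is the \emph{next} lemma and is deduced \emph{from} local product structure; to avoid circularity you would have to invoke an independent ambient shadowing lemma (pseudo-orbits in $K$ shadowed by true orbits in $M_F$, not in $K_F$), and only then use periodicity to force the shadowing orbit into $K$.
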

\begin{proof} If $\underline x, \underline y$ are close enough then $\pi_0 W^u_\epsilon(\underline x; F)$ intersects $\pi_0 W^s_\epsilon(\underline y;F)$ at a unique point $z$. Thus for every pair of periodic points $\underline x', \underline y'$ close to $\underline x, \underline y$, the local unstable manifold $\pi_0 W^u_\epsilon(\underline x'; F)$ intersects $\pi_0 W^s_\epsilon(\underline y';F)$ at a unique point $z'$. As $\pi_0 W^u_\epsilon(\underline y'; F)$ intersects $\pi_0 W^s_\epsilon(\underline x';F)$, the point $z'$ is nonwandering. Thus $z$ is nonwandering and also its preimage $[\underline x, \underline y]$ by $\pi_0|W^u_\epsilon(\underline x; F)$.\end{proof}

The existence of a local product structure is useful for the following:

\begin{lemm} A hyperbolic set equipped with a local product structure satisfies the shadowing property.\end{lemm}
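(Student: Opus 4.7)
The plan is to run Bowen's classical shadowing construction in the abstract setting $(K_F,F)$. The hypothesis supplies exactly the ingredients used in the diffeomorphism case: $F$ is a homeomorphism of the compact metric space $K_F$, the local product structure gives a continuous bracket $[\underline u,\underline v]\in W^u_\epsilon(\underline u)\cap W^s_\epsilon(\underline v)\subset K_F$ for nearby $\underline u,\underline v\in K_F$, and hyperbolicity yields constants $C>0$ and $\lambda\in(0,1)$ such that $d(F^n\underline u,F^n\underline v)\le C\lambda^n d(\underline u,\underline v)$ when $\underline v\in W^s_\epsilon(\underline u)$ and $n\ge 0$, with a symmetric estimate under $F^{-n}$ when $\underline v\in W^u_\epsilon(\underline u)$. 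Passing to a power of $F$ if necessary, I may assume $C^2\lambda<1$.

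Given $\delta>0$, I would fix $\epsilon<\delta$ on which the bracket is well defined and $C$-Lipschitz, and then choose a pseudo-orbit tolerance $\epsilon'\ll\epsilon$. For any $\epsilon'$-pseudo-orbit $(\underline x^{(n)})_{n\in\mathbb Z}$ in $K_F$ and every $N\ge 1$, I define finite approximants inductively by
\[
\underline u^{(N)}_{-N}:=\underline x^{(-N)},\qquad
\underline u^{(N)}_k:=\bigl[F\underline u^{(N)}_{k-1},\,\underline x^{(k)}\bigr],\quad -N<k\le N.
\]
Setting $a_k:=d(F\underline u^{(N)}_{k-1},\underline x^{(k)})$, the stable contraction applied to $\underline u^{(N)}_{k-1}\in W^s_\epsilon(\underline x^{(k-1)})$, together with the pseudo-orbit condition, yields the recursion $a_k\le C^2\lambda\,a_{k-1}+\epsilon'$, so $a_k$ stays bounded by a geometric sum of $\epsilon'$; in particular every bracket is legitimate. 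Putting $\underline y^{(N)}:=F^{-N}\underline u^{(N)}_N\in K_F$, the dual unstable contraction, applied as one pushes $\underline u^{(N)}_N$ backwards through the intermediate brackets, gives $d(F^n\underline y^{(N)},\underline x^{(n)})\le C_0\epsilon'$ uniformly for $-N\le n\le N$, with $C_0$ independent of $N$ and of the pseudo-orbit.

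By compactness of $K_F$ the sequence $(\underline y^{(N)})_N$ has a subsequential limit $\underline y^\infty\in K_F$, and continuity of $F$ combined with the uniform estimate above yields $d(F^n\underline y^\infty,\underline x^{(n)})\le C_0\epsilon'<\delta$ for every $n\in\mathbb Z$, which is the desired shadowing. The only genuinely delicate step, and the main obstacle of the proof, is the two-sided geometric-series bookkeeping that keeps every iterated bracket inside the domain where the local product structure is defined and that converts the contractions of $F$ on stable leaves and of $F^{-1}$ on unstable leaves into the uniform bound $C_0\epsilon'$; the rest is continuity and compactness.
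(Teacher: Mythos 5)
Your proof is correct and is exactly what the paper invokes: the paper's entire proof is the single sentence ``The proof of this lemma is treated as for diffeomorphisms,'' which refers to precisely this classical Bowen construction (iterated local-product brackets with geometric control of errors) carried out on the compact system $(K_F,F)$, where $F$ is a homeomorphism. Your write-up simply makes that standard argument explicit in the inverse-limit setting, so the approach is the same.
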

\begin{proof}The proof of this lemma is treated as for diffeomorphisms.\end{proof}

\begin{lemm} If $f$ is attractor-repeller\footnote{Actually $f$ axiom A is sufficient.}, then $M_F$ is equal to the union of the unstable manifolds of $\Omega_F$'s points.\end{lemm}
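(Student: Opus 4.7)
Take an arbitrary $\underline{x}\in M_F$; the aim is to produce $\underline{y}\in\Omega_F$ with $\underline{x}\in W^u(\underline{y};F)$. The strategy is to analyze the $F$-backward $\alpha$-limit set
\[
\alpha(\underline{x})=\bigcap_{N\geq 0}\overline{\{F^{-n}(\underline{x}):n\geq N\}}\subset M_F,
\]
which is nonempty, closed and $F$-invariant by compactness of $M_F$. A standard argument shows every point of $\alpha(\underline{x})$ is $F$-nonwandering, so $\alpha(\underline{x})\subset\Omega_F$. Setting $A_F:=A_f^{\mathbb Z}\cap M_F$ and $R_F:=R_f^{\mathbb Z}\cap M_F$, one has $\Omega_F=A_F\sqcup R_F$: if $\underline{z}\in\Omega_F$ had $z_0\in A_f$ but some $z_{-n}\in R_f$, then $z_0=f^n(z_{-n})\in f^n(R_f)\subset R_f$, contradicting $A_f\cap R_f=\emptyset$; the forward case is symmetric.

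\textbf{Case A: $\alpha(\underline{x})\cap A_F\neq\emptyset$.} Pick a subsequence $F^{-n_k}(\underline{x})\to \underline{y}_0\in A_F$, so $x_{-n_k}\to y_{0,0}\in A_f$ and $x_{-n_k}\in V_A$ for $k$ large (one may take $V_A$ forward invariant, shrinking if needed). For any fixed $j\geq 0$ and any $m\geq 0$, choosing $k$ with $n_k-j\geq m$ yields
\[
x_{-j}=f^{n_k-j}(x_{-n_k})\in f^{n_k-j}(V_A)\subset f^m(V_A),
\]
so $x_{-j}\in\bigcap_{m\geq 0}f^m(V_A)=A_f$. Hence $\underline{x}\in A_F\subset\Omega_F$ and $\underline{x}\in W^u(\underline{x};F)$ trivially.

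\textbf{Case B: $\alpha(\underline{x})\subset R_F$.} Then $F^{-n}(\underline{x})\to R_F$ in $M_F$, i.e.\ $\underline{x}$ belongs to the basin of $R_F$ under the homeomorphism $F^{-1}\colon M_F\to M_F$. Viewing $R_F$ as a hyperbolic attractor of $F^{-1}$, the invariant-manifold theorem (adapted to the inverse-limit setting) shows that this basin is foliated by the $F^{-1}$-stable manifolds of points of $R_F$, which are exactly the unstable manifolds $W^u(\underline{y};F)$, $\underline{y}\in R_F$. Hence $\underline{x}\in W^u(\underline{y};F)$ for some $\underline{y}\in R_F\subset\Omega_F$, as desired.

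\textbf{Main obstacle.} The substantive step is the invariant-manifold statement in Case B: that in $M_F$ the basin of the hyperbolic $F^{-1}$-attractor $R_F$ coincides with $\bigcup_{\underline{y}\in R_F} W^u(\underline{y};F)$. Its proof combines the hyperbolicity of $R_f$, the local product structure on $\Omega_F$ furnished by the preceding lemma, and the repeller condition $\bigcap_{n\geq 0}f^{-n}(V_R)=R_f$, which upgrades ``forward orbit trapped in $V_R$'' to ``lies in $R_f$''; it is carried out by a graph-transform argument in coordinates adapted to the $E^u$-bundle over $R_F$. The additional care compared to the diffeomorphism case is that $M_F$ is not a manifold and $F^{-1}$ is only a homeomorphism, but the $C^1$-regularity required by the graph transform survives along unstable leaves.
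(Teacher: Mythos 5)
Your Case A is fine and is essentially the contrapositive of the paper's one-line observation (if $\underline x\notin A_F$ then $\alpha(\underline x)$ cannot meet $A_F$, hence lies in $R_F\subset\pi_0^{-1}(V_R)$); the forward-invariance of $V_A$ you invoke is the standard shrink and is unproblematic.

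The divergence is in Case B, and there you have a genuine gap. The paper deduces the conclusion directly from the shadowing lemma it has just established (a hyperbolic set with local product structure has the shadowing property in the inverse-limit sense): once $x_n$ stays $\epsilon$-close to $R_f$ for all $n\le 0$, shadowing supplies a true orbit $\underline y\in R_F$ with $d(x_n,y_n)<\delta$ for all $n\le 0$, which is exactly $\underline x\in W^u_\delta(\underline y;F)$. You instead invoke ``the invariant-manifold theorem (adapted to the inverse-limit setting)'' to claim that the $F^{-1}$-basin of $R_F$ is foliated by unstable leaves. That is not an off-the-shelf theorem: $M_F$ is not a manifold and $F^{-1}$ is only a homeomorphism, so the usual graph-transform setup (tubular neighborhood, fiber contraction in a Banach space of sections) does not apply in $M_F$ as you describe. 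Your remark that ``the $C^1$-regularity survives along unstable leaves'' is circular, since those leaves filling a neighbourhood of $\hat R_F$ is precisely what must be shown; in the paper this is Lemma~\ref{unautrelemm}, and its proof again goes through shadowing. If you want the graph-transform route, it would have to be run in $M$ (via $\pi_0$) on the preimage bundle over $R_F$, and even then identifying the basin in $M_F$ with the union of unstable sets requires an argument equivalent in substance to shadowing. Given that the paper has already proved shadowing from the local-product-structure lemma, the shadowing route is both the intended one and the cleaner one; the graph-transform route as you sketch it leaves the crux unproved.
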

\begin{proof}
Every point has its $\alpha$-limit set in $\Omega_F$, and so if the point $\underline x=(x_n)_n$ does not belong to $A_F$ then its $\alpha$-limit set is included in $\pi_0^{-1}(V_{R})$. Thus for $n<0$ small, the points $x_n$ remain close to $R_f$. By shadowing we show that such $x_n$ belong to local unstable manifolds of points in  $R_F:=R_f^\mathbb Z\cap M_F$.\end{proof}

If an attractor-repeller $f$ satisfies the strong transversality condition, then
for every $x\in \Omega_f$, and $n\ge 0$, the restriction  $f^n{|M_f}$ is transverse to $W^s_\epsilon(x)$. This means that for $z\in f^{-n}\big(W^s_\epsilon(x;f)\big)\cap M_f$, we have:
\begin{equation}
\label{stc2}
T_zf^n(T_z M)+T_{f^{n}(z)}\big(W^s_\epsilon(x;f)\big)=T_{f^{n}(z)}M.
\end{equation}

\subsection{Extension $\hat R_f$ of the repeller $R_f$}
Let us begin by showing some previous results:

\begin{lemm}\label{lem1} Let $f$ be an attractor-repeller endomorphism. Then $\hat R_f= W^s(R_f)\cap M_f=M_f\cap \cup_{n\ge 0} f^{-n}( R_f )$ is compact. Also if Equation (\ref{stc2}) holds, then $\hat R_f$ is hyperbolic and endowed with a product structure.
\end{lemm}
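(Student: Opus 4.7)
The plan has three steps: first show compactness of $\hat R_f$ via a complementary description using the trapping neighborhood of $A_f$; then build $E^s$ on $\hat R_f$ and $E^u$ on $\hat R_F$ by pulling the hyperbolic structure of $R_f$ back along forward and backward orbits, with transversality (\ref{stc2}) controlling dimensions; finally deduce the product structure by applying the shadowing property on $R_f$ from the preceding lemma.

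For compactness, I fix an open forward-invariant neighborhood $V_A$ of $A_f$ with $\overline{V_A}\cap R_f=\emptyset$ (such a $V_A$ exists since $A_f$ is an attractor), and prove
\[
\hat R_f \;=\; M_f \setminus \bigcup_{n\ge 0} f^{-n}(V_A).
\]
Forward-invariance of $V_A$ gives ``$\subset$'' immediately: an orbit entering $V_A$ has $\omega$-limit inside $\overline{V_A}$, hence disjoint from $R_f$. For ``$\supset$'', an orbit in $M_f$ avoiding $V_A$ has $\omega$-limit in $\Omega_f\setminus V_A=R_f$; its tail lies in $V_R$, and the defining property $\bigcap_{n\ge 0}f^{-n}(V_R)=R_f$ forces some iterate into $R_f$ itself, so $z\in M_f\cap\bigcup_n f^{-n}(R_f)=W^s(R_f)\cap M_f$. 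As a closed subset of the compact set $M_f$, $\hat R_f$ is then compact, and the two alternative formulas coincide.

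For hyperbolicity, for each $z\in\hat R_f$ I let $k=k(z)\ge 0$ be minimal with $f^k(z)\in R_f$ and define
\[
E^s(z) := (T_zf^k)^{-1}\bigl(E^s(f^k(z))\bigr).
\]
The transversality hypothesis (\ref{stc2}) applied at $f^k(z)\in R_f$ with iterate $k$ yields $T_zf^k(T_zM)+E^s(f^k(z))=T_{f^k(z)}M$; the standard kernel--image identity then gives $\dim E^s(z)=\dim T_zM+\dim E^s(f^k(z))-\dim T_{f^k(z)}M=s$. Independence of the choice of iterate $k$, and the invariance $T_zf(E^s(z))\subset E^s(f(z))$, both follow from $Tf$-invariance of $E^s$ on $R_f$ together with the invertibility of the induced action $[Tf]$ on the quotient bundle at points of $R_f$. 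Dually, for $\underline z\in\hat R_F:=\hat R_f^{\mathbb Z}\cap M_F$, the $\alpha$-limit of $\underline z$ lies in $\Omega_f$ and cannot meet the forward-trapped $A_f$, so the backward orbit of $\underline z$ enters a neighborhood of $R_f$ in finite time and $E^u(\underline z)$ is defined by transporting $E^u$ forward along $\underline z$ from such a visit near $R_F$.

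The local product structure then follows as in the diffeomorphism case: for nearby $\underline z,\underline w\in\hat R_F$ the local manifolds $W^u_\epsilon(\underline z;F)$ and $W^s_\epsilon(\underline w;F)$ meet in a unique point by transversality and hyperbolicity, and shadowing on $R_f$ combined with full-invariance of $\hat R_f$ in $M_f$ keeps this intersection inside $\hat R_F$. The main obstacle I anticipate is the uniformity of the hyperbolic estimates over $\hat R_f$: the pullback construction immediately yields only the pointwise contraction $\|T_zf^n|E^s(z)\|\to 0$, whereas the definition of hyperbolic set demands a single $N$ working for every $z$; since the entry time $k(z)$ is only lower semicontinuous on $\hat R_f$ and may fail to be bounded, I would circumvent this by extending the stable cone field of $R_f$ to a fixed open neighborhood $U\supset R_f$ in $M$ on which $Tf$ contracts the cones at a uniform rate, then invoke compactness of $\hat R_f$ and the open cover $\{z\in\hat R_f:f^n(z)\in U\}_{n\ge 0}$ to bound the first entry time into $U$ uniformly, and absorb the uniformly bounded initial growth into a single larger iterate $N$.
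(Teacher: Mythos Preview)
Your proposal is correct and follows the same broad architecture as the paper (complement of the basin of $A_f$ for compactness; pullback of $E^s$ along forward orbits for hyperbolicity), but the execution diverges in three places worth noting.

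First, the paper never builds $E^u(\underline z)$ on $\hat R_F$ by backward transport. Recall the paper's definition of hyperbolicity is stated purely in terms of a continuous $E^s$ together with invertibility and expansion of the induced action $[Tf]$ on the quotient $TM/E^s$. Accordingly, the paper simply sets $E^u:=(TM|\hat R_f)/E^s$ and checks the quotient estimates; your detour through the $\alpha$-limit set of $\underline z$ is valid but unnecessary.

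Second, for the uniformity of the hyperbolic constants you anticipate a difficulty and propose a cone-field argument with a bounded first-entry time into a neighborhood of $R_f$. The paper avoids this entirely: it first proves that $E^s$ is \emph{continuous} on $\hat R_f$ (reducing by $Tf$-invariance to continuity at points of $R_f$, which follows from the strong transversality condition $(T)$ and the $\lambda$-lemma), and then the pointwise eventual contraction/expansion plus continuity plus compactness of $\hat R_f$ give a uniform $N$ directly. Your cone-field route works and in fact yields continuity of $E^s$ as a by-product, but you should be aware that continuity is the real missing ingredient here, not merely uniform rates; once you have it, the compactness argument is one line.

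Third, for the local product structure the paper does not invoke shadowing on $R_f$. It observes that $\hat R_f$ is by construction equal to its own stable set in $M_f$, hence $W^s_\epsilon(\underline w;F)\subset \hat R_F$ for every $\underline w\in\hat R_F$, and the bracket $[\underline z,\underline w]$ automatically lies in $\hat R_F$. This is shorter than your shadowing argument, though both are correct.
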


\begin{proof}
The set $\hat R_f$ is compact since it is the complement of the basin of $A_f$. For, every point $x$ has its $\omega$-limit set which is
either included in $R_f$ or in $A_f$. The first case
corresponds to $x$ in $\hat R_f$, the latter to $x$ in the basin of $A_f$.

We define on $\hat R_f$ the Grassmanian section $E^s$: $x\in f^{-n} (R_f)\cap M_f\mapsto (T_x f^n)^{-1} (E^s _{f^n (x)})$.

Let us show that $E^s$ is continuous. By $Tf$ invariance of this bundle, it is sufficient to show the continuity at $R_f$. At a point of $R_f$, the continuity follows from $(T)$ applied to $\underline x\in R_F$ and $y\in R_f$, and furthermore the lambda lemma.  

Let $E^u:= (TM|\hat R_f)/E^s$ and denote by $[Tf]$ the action of $Tf$ on this quotient bundle.

For every $x\in \hat R_f$ there exists $N\ge 0$ such that for every $n\ge N$:
\begin{itemize}
\item $T_x f^n |E^s$ is $\frac{1}{2}$-contracting,
\item $[T f^n](x)$ is $2$-expanding.
\end{itemize}
By compactness of $\hat R_f$, there exists $N>0$ such that
for all $x\in \hat R_f$:
\[\left\{\begin{array}{l}\|T_x f^n |E^s \|<1\\
\|[T f]^n (u)\|>1\end{array}\right.\]
 for every unit vector $u\in E^u$.

In other words $\hat R_f$ is hyperbolic. As $\hat R_F$ is equal to its stable set, it has a local product structure.
\end{proof}

Let us now suppose that $f$ is an attractor-repeller endomorphism which satisfies the strong transversality condition.

Let us denote by $\hat R_F:= M_F\cap \hat R_f^\mathbb Z$. We remark that $\hat R_f$ is $f|M_f$-invariant: $f^{-1}(\hat R_f)\cap M_f=\hat R_f$.

\begin{lemm}\label{unautrelemm} The set  $V_{\hat R_F}=W^u_\epsilon(\hat R_F):= \cup_{\underline x} W^u_\epsilon(\underline x;F)$ is a neighborhood of $\hat R_f$.\end{lemm}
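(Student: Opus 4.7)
The proof rests on the hyperbolicity of $\hat R_F$ established in Lemma~\ref{lem1} together with the transversality condition. My plan is as follows.

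First, I would apply the standard stable/unstable manifold theorem to the compact hyperbolic set $\hat R_F\subset M_F$ of the homeomorphism $F$. Since $\hat R_F$ admits a local product structure by Lemma~\ref{lem1}, for $\epsilon>0$ sufficiently small the local unstable manifolds $W^u_\epsilon(\underline x;F)$ vary continuously with $\underline x\in\hat R_F$; via $\pi_0$ they are embedded $u$-dimensional $C^1$-disks in $M$, with tangent space $E^u(\underline x)$ at $x_0=\pi_0(\underline x)$.

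Second, given $\underline z\in M_F$ close to $\hat R_F$, I would construct an $\underline x\in\hat R_F$ with $\underline z\in W^u_\epsilon(\underline x;F)$. The point $z_0$ lies close to $\hat R_f$ in $M$; using the local stable lamination of $\hat R_f$ one associates to $z_0$ a point $y\in\hat R_f$ with $z_0\in W^s_\epsilon(y;f)$. The transversality condition, via Equation~(\ref{stc2}), ensures that $\pi_0(W^u_\epsilon(\underline{x'};F))$ is transverse to $W^s_\epsilon(y;f)$ for every $\underline{x'}\in\hat R_F$ with $\pi_0(\underline{x'})$ near $y$, and the dimension count $u+s=m$ forces this transverse intersection to be a single point. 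A holonomy argument between the stable foliation and the family $\{\pi_0(W^u_\epsilon(\underline{x'};F))\}_{\underline{x'}\in\hat R_F}$ then yields, for any reference $\underline x^\infty\in\hat R_F$ sufficiently close to $\underline z$, a unique $\underline x\in\hat R_F$ near $\underline x^\infty$ with $z_0\in\pi_0(W^u_\epsilon(\underline x;F))$.

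Finally, I would verify that $\underline z\in W^u_\epsilon(\underline x;F)$, not merely $z_0\in\pi_0(W^u_\epsilon(\underline x;F))$. Since $\pi_0$ is injective on $W^u_\epsilon(\underline x;F)$, there is a unique $\underline z^{\ast}\in W^u_\epsilon(\underline x;F)$ projecting to $z_0$, and it suffices to show $\underline z^{\ast}=\underline z$. This is where I expect the main difficulty: one must match the preorbit $(z_i)_{i\le 0}$ specified by $\underline z\in M_F$ with the preorbit dictated by the local unstable manifold through $\underline x$. The matching is achieved by exploiting the hyperbolic contraction of $F^{-1}$ on $E^u$ together with the expansiveness of $F$ on $\hat R_F$: two preorbits of $z_0$ which remain $\epsilon$-close to $(x_i)_{i\le 0}$ must coincide. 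To upgrade the required preorbit-closeness from mere $M_F$-closeness (i.e.\ closeness in finitely many coordinates), one uses the attractor-repeller filtration together with the hyperbolicity of $\hat R_f$, which force the preorbit $(z_i)_{i\le 0}$ to shadow $(x_i)_{i\le 0}$ throughout.
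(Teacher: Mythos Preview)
Your second step contains a genuine error. You claim that for $z_0=\pi_0(\underline z)$ close to $\hat R_f$ one can find $y\in\hat R_f$ with $z_0\in W^s_\epsilon(y;f)$. This is false in general: by the very definition $\hat R_f=W^s(R_f)\cap M_f$, the set $\hat R_f$ is already its own stable set inside $M_f$, so any $z_0\in M_f\setminus\hat R_f$ lies in the basin of $A_f$ and its forward orbit leaves every neighborhood of $\hat R_f$. The local stable lamination of $\hat R_f$ does not foliate a neighborhood of $\hat R_f$; it only covers $\hat R_f$ itself within $M_f$. Consequently the holonomy construction you propose cannot even begin, and the transversality condition $(T)$---which you invoke here---plays no role in this lemma.

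The correct mechanism is the one you allude to only in your last sentence, and it should be promoted to the whole argument. The paper isolates it as a sublemma: if $\pi_0(\underline z)$ is close to $\hat R_f$, then \emph{every} $\pi_{-n}(\underline z)$, $n\ge 0$, is close to $\hat R_f$. This is proved by contradiction using exactly the attractor--repeller dichotomy (any accumulation point of a wayward backward coordinate would lie outside the basin of $A_f$, hence in $\hat R_f$, a contradiction). Once the entire backward orbit $(z_i)_{i\le 0}$ is known to stay near $\hat R_f$, the local product structure of $\hat R_F$ and the shadowing lemma directly produce $\underline x\in\hat R_F$ with $d(z_i,x_i)$ small for all $i\le 0$, i.e.\ $\underline z\in W^u_\epsilon(\underline x;F)$. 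No separate matching step for the preorbit is needed: the $\underline x$ comes from shadowing the full backward orbit of $\underline z$, not from a holonomy argument based on $z_0$ alone.
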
 
By shadowing this lemma is an easy consequence of the following:
\begin{sublemm} Every orbit $\underline x\in M_F$ such that $\pi_0(\underline x)$ is close to $\hat R_f$ satisfies that $\pi_{-n}(\underline x)$ is close to $\hat R_f$ for every $n\ge 0$. 
\end{sublemm}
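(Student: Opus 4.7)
The plan is to argue by contradiction, exploiting two structural facts: the backward invariance $f^{-1}(\hat R_f)\cap M_f=\hat R_f$ remarked just before the lemma, and the dichotomy $M_f=\hat R_f\sqcup B(A_f)$, where $B(A_f)=\{x\in M_f:\omega(x)\subset A_f\}$ is the basin of attraction of $A_f$ inside $M_f$. Assume the conclusion fails: there exist $\epsilon>0$, orbits $\underline x^{(k)}\in M_F$, and integers $n_k\ge 0$ such that $d(\pi_0(\underline x^{(k)}),\hat R_f)\to 0$ while $d(\pi_{-n_k}(\underline x^{(k)}),\hat R_f)\ge\epsilon$. Shift by setting $\underline y^{(k)}:=F^{-n_k}(\underline x^{(k)})$, so that $d(\pi_0(\underline y^{(k)}),\hat R_f)\ge\epsilon$ and $d(\pi_{n_k}(\underline y^{(k)}),\hat R_f)\to 0$. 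Compactness of $M_F$ yields a subsequential limit $\underline y^{(k)}\to\underline y\in M_F$ with $\pi_0(\underline y)\in M_f\setminus\hat R_f$, hence $\pi_0(\underline y)\in B(A_f)$ by the dichotomy.

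Split according to the asymptotics of $n_k$. If a subsequence of $n_k$ remains bounded with limit $n^*$, then $\pi_{n^*}(\underline y)\in\hat R_f$ by closedness, and $n^*$ successive applications of $f^{-1}(\hat R_f)\cap M_f=\hat R_f$ along the coordinates of $\underline y$ give $\pi_0(\underline y)\in\hat R_f$, contradicting the lower bound. If instead $n_k\to\infty$, fix a forward-invariant open neighborhood $V_A$ of $A_f$ so small that $\overline{V_A}\cap R_f=\emptyset$, which is available by standard axiom A arguments (shrinking the neighborhood from Definition \ref{AR} if necessary). Since $\pi_0(\underline y)\in B(A_f)$, some iterate $f^{n^*}(\pi_0(\underline y))$ lies in $V_A$; continuity of $f^{n^*}$ then yields $f^{n^*}(\pi_0(\underline y^{(k)}))\in V_A$ for $k$ large, and forward invariance propagates this to $\pi_{n_k}(\underline y^{(k)})=f^{n_k}(\pi_0(\underline y^{(k)}))\in\overline{V_A}$ as soon as $n_k\ge n^*$. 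A further subsequential limit produces a point of $\overline{V_A}\cap\hat R_f$. But this intersection is empty: any $z$ in it would satisfy $f^n(z)\in\overline{V_A}$ for all $n\ge 0$ while $f^n(z)\to R_f$, contradicting $d(\overline{V_A},R_f)>0$.

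The main obstacle is the preliminary dichotomy $M_f=\hat R_f\sqcup B(A_f)$, which is where the attractor-repeller hypothesis is actually used. Given $x\in M_f$, axiom A gives $\omega(x)\subset\Omega_f=A_f\cup R_f$. If $\omega(x)$ meets $A_f$, the orbit enters $V_A$ and stays in $\overline{V_A}$ by forward invariance, so $\omega(x)\subset\overline{V_A}\cap\Omega_f\subset A_f$ and $x\in B(A_f)$. Otherwise $\omega(x)\subset R_f$, so $f^n(x)\in V_R$ for all $n\ge n_0$, and hence $f^{n_0}(x)\in\bigcap_{n\ge 0}f^{-n}(V_R)=R_f$, giving $x\in\hat R_f$. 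The two possibilities are mutually exclusive since $A_f\cap R_f=\emptyset$. Once this dichotomy is established, the rest of the argument reduces to routine compactness and continuity.
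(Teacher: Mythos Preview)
Your proof is correct and follows essentially the same contradiction strategy as the paper: extract a limit point at distance $\ge\epsilon$ from $\hat R_f$, argue it cannot lie in the basin of $A_f$, and invoke the dichotomy $M_f=\hat R_f\sqcup B(A_f)$ (already established in Lemma~\ref{lem1}) to force it into $\hat R_f$, a contradiction. The paper's version is terser---it takes the limit in $M$ rather than $M_F$ and asserts ``$y$ cannot lie in the (open) basin of $A_f$'' without explicitly splitting into the bounded/unbounded $n_k$ cases---but your explicit case split and your use of a forward-invariant $V_A$ are exactly what justifies that assertion.
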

\begin{proof} Otherwise there exists $\delta>0$ and a sequence of orbits $(\underline x^n)_n$ such that $(\pi_0(\underline x^n))_n$ approaches $\hat R_f$ but there exists $(m_n)_n$ such that $\pi_{-m_n} (\underline x^n)$ is $\delta$-distant to $\hat R_f$ for every $n$. Let $y$ be an accumulation point of $(\pi_{-m_n} (\underline x^n))_n$. The point $y$ cannot lie in the (open) basin of $A_f$ and so its $w$-limit set is included in $\hat R_f$. By shadowing, $y$ belongs to a local stable manifold of a  point $\hat R_f$. In other words $y$ belongs to $\hat R_f$, this is a contradiction.\end{proof}

%
%
%

We fix $\hat \epsilon>\epsilon>0$ sufficiently small in order that  $\pi_0(W^u_{\hat \epsilon}(\underline x))$ is a submanifold embedded by $f$ for every $\underline x\in \hat R_f$.

\subsection{Stratification of $M_F$ by laminations}
As $f$ is in general not onto, we have to keep in mind that we work only on $M_F$. This set is in  general not a manifold not even a lamination (see for instance Example \ref{onedim}). However we are going to stratify it into three laminations suitable to construct the conjugacy. Let us recall some elements of the lamination theory applied to hyperbolic dynamical systems.

 A \emph{lamination} is a secondly countable metric space $L$ locally modeled on  open subsets $U_i$ of products of $\mathbb R^n$ with locally compact metric spaces $T_i$ (via homeomorphisms called \emph{charts}) such that the changes of coordinates  are of the form:
\[\phi_{ij}=\phi_j\circ \phi_i^{-1}:\; U_i\subset \mathbb R^n\times T_i\rightarrow U_j\subset \mathbb R^n\times T_j\]
\[(x,t)\mapsto (g(x,t),\psi(x,t)),\]
where the partial derivative w.r.t. $x$ of $g$ exists and is a continuous function of both $x$ and $t$, also $\psi$ is locally constant w.r.t. $x$.
A maximal atlas  $\mathcal L$ of compatible charts is a \emph{lamination structure} on $L$.

A \emph{plaque} is a component of $\phi_i^{-1}(\mathbb R^n\times \{t\})$ for a chart $\phi_i$ and $t\in T_i$.
The \emph{leaf} of $x\in L$ is the union of all the plaques which contain $x$. A leaf has a structure of manifold of dimension $n$. The \emph{tangent space} $T\mathcal L$ of $\mathcal L$ is the vector bundle over $L$  whose fiber $T_x \mathcal L$ at $x\in L$ is the tangent space at $x$ of its leaf.

The stratification is made by the two $0$-dimensional laminations (leaves are points) supported by $A_F$ and $R_F$, and by the lamination $\mathcal L$ supported by $M_F\setminus \Omega_F$ whose leaves are the intersections of stable and unstable manifolds components. The construction of $\mathcal L$ is delicate and is the object of this section.

We prefer to see $A_F$ and $R_F$ as laminations because  they turn out to be non trivial laminations for similar problem (semi-flow, bundle over attractor-repeller dynamics).

Let us construct a laminar structure on $W^u(\hat R_F):=\cup_{n\ge 0} F^n(W^u_\epsilon (\hat R_F))$.

\begin{prop}\label{3.3} The set $W^u(\hat R_F)$ is endowed with a structure of lamination $\mathcal L^u$ whose plaques are local unstable manifolds.
\end{prop}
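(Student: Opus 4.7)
\medskip

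\textbf{Plan of proof.} My approach is to build $\mathcal L^u$ first locally on $W^u_\epsilon(\hat R_F)$ using the local product structure on $\hat R_F$ from Lemma~\ref{lem1}, and then to extend it to all of $W^u(\hat R_F)$ by pushing forward under the homeomorphism $F\colon M_F\to M_F$.

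First I would fix a base point $\underline x_0\in \hat R_F$ and choose a small ``stable slice'' $S\subset \hat R_F$ through $\underline x_0$, namely $S:=W^s_\epsilon(\underline x_0;F)\cap \hat R_F$, viewed with the subspace topology from $M_F$. By the local product structure, for $\underline w\in S$ sufficiently close to $\underline x_0$ the bracket $[\underline w,\underline x_0]$ is well defined and stays in $\hat R_F$, and every $\underline z\in \hat R_F$ near $\underline x_0$ is uniquely of this form. By Lemma~\ref{unautrelemm} and its sublemma, every $\underline y\in W^u_\epsilon(\hat R_F)$ close to $\underline x_0$ lies on a unique local unstable manifold $W^u_\epsilon(\underline z;F)$ with $\underline z\in \hat R_F$; using $\pi_0|W^u_\epsilon(\underline z;F)$ (which is an embedding for $\epsilon$ small) as a smooth leaf coordinate, I obtain a chart
\[
\phi\colon W^u_\epsilon(\hat R_F)\cap U\;\longrightarrow\; \mathbb R^u\times S,\qquad \underline y\longmapsto \big(\pi_0(\underline y)-\pi_0(\underline z),\,\underline w\big),
\]
expressed in a local trivialization of $E^u$ over a neighborhood of $\underline x_0$. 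The plaques are then exactly the local unstable manifolds $W^u_\epsilon(\underline z;F)$.

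Next I would verify the lamination axioms. The transversal coordinate $\underline w$ is locally constant along each plaque by construction. The required continuity of the first partial derivatives of the change of chart with respect to the leaf coordinate reduces to the joint continuity, in $\underline z\in \hat R_F$ and in the base point, of the unstable distribution $E^u(\underline z)$ and of the embedded disk $W^u_\epsilon(\underline z;F)$ together with its tangent space. This is the standard statement of the stable/unstable manifold theorem for a compact hyperbolic set with local product structure, which applies to $\hat R_f$ thanks to Lemma~\ref{lem1}. Covering a neighborhood of $\hat R_F$ in $W^u_\epsilon(\hat R_F)$ by finitely many such charts yields a laminar atlas $\mathcal L^u_{\mathrm{loc}}$ on $W^u_\epsilon(\hat R_F)$.

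To promote $\mathcal L^u_{\mathrm{loc}}$ to a lamination on $W^u(\hat R_F)=\bigcup_{n\ge 0}F^n\bigl(W^u_\epsilon(\hat R_F)\bigr)$, I push forward each local chart by iterates of $F$. Since $F$ is a homeomorphism on $M_F$ and sends local unstable manifolds onto larger unstable sets (so that $F^n(W^u_\epsilon(\underline z;F))\supset W^u_\epsilon(F^n(\underline z);F)$), the image $F^n(\phi)$ of a chart is again a chart whose plaques are (pieces of) local unstable manifolds at points of $\hat R_F$. Compatibility of two such iterated charts around a common point $\underline y$ follows because any two plaques through $\underline y$ are open pieces of the same unstable manifold $W^u(\underline y;F)$, and because $\pi_0$ remains injective on each local iterate once $\epsilon$ is small (by the same injectivity used in the proof of Theorem~\ref{th1}).

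The main obstacle I anticipate is the last point: when one pushes a chart forward by $F^n$, the image sets become larger than the $\epsilon$-scale on which plaques are defined, so one must refine the cover after each iteration so that every plaque, after possibly being cut down, is still realized as a local unstable manifold $W^u_\epsilon(\underline z;F)$ at some $\underline z\in \hat R_F$. Handling this refinement in a coherent way, and checking that the transversal directions match under the change of chart coming from $F$, is where one must be careful; everything else reduces to standard hyperbolic manifold theory applied to $\hat R_F$ via Lemma~\ref{lem1}.
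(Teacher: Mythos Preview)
Your proposal is correct and follows essentially the same route as the paper: build charts on $W^u_\epsilon(\hat R_F)$ using the local product structure of $\hat R_F$ from Lemma~\ref{lem1}, with transversal $W^s_\epsilon(\underline x)\cap\hat R_F$ and plaques the local unstable manifolds, then transport the structure by iterates of $F$.

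The only point worth noting is that the ``main obstacle'' you anticipate is in fact a non-issue, and the paper dispatches it in one sentence. Since $W^u(\hat R_F)=\bigcup_{n\ge0}F^n\bigl(W^u_\epsilon(\hat R_F)\bigr)$ is an \emph{increasing} union and $F$ is a homeomorphism of $M_F$, any chart $\phi$ on $W^u_\epsilon(\hat R_F)$ yields a chart $\phi\circ F^{-n}$ on $F^n(W^u_\epsilon(\hat R_F))$, and these are automatically compatible on overlaps because $F$ carries unstable plaques into unstable plaques. There is no need to cut the pushed-forward plaques back down to $\epsilon$-scale or to re-identify them as $W^u_\epsilon(\underline z;F)$ for some new $\underline z$: the lamination axioms put no size constraint on plaques, only a compatibility constraint on charts, and that is inherited for free from the homeomorphism. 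So your careful refinement step can simply be dropped.
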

 \begin{proof}
First we notice that $W^u(\hat R_F)$ is equal to the increasing open union $\cup_{n\ge 0} F^n(W^u_\epsilon (\hat R_F))$. As $F$ is a homeomorphism of $M_F$, we just need to exhibit a laminar structure on $W^u_\epsilon (\hat R_F)$.

Let us express some charts of neighborhoods of any $\underline x\in \hat R_F$ that span the laminar structure on $W^u_\epsilon (\hat R_F)$.
 For every $\underline y\in \hat R_F$ close to $\underline x$, the intersection of  $W^u_\epsilon (\underline y)$ with $W^s_\epsilon(\underline x)$ is a point $t=[\underline y,\underline x]$ in $\hat R_F$ by Lemma \ref{lem1}.  Also we can find a family of homeomorphisms $(\phi_t)_t$ which depends continuously on $t$ and sends $W^u _\epsilon(t)$ onto $\mathbb R^d$. We notice that the map:
\[ \underline y\mapsto (\phi_t(\underline y), t)\in \mathbb R^d \times W^s_\epsilon (\underline x)\]
 is a homeomorphism which is a chart of lamination.
 \end{proof}

It is well known that $W^s_\epsilon(A_f)$ has a structure of lamination, whose leaves are local stable manifolds (a direct proof is similar and simpler  than the one of Proposition \ref{3.3})

To construct the last lamination we are going to proceed by transversality. Let us recall some general definitions and facts.

We recall that a continuous map $g$ from a lamination $\mathcal L$ to a manifold $M$ is \emph{of class $C^1$} if its restriction to every plaque of $\mathcal L$ is a $C^1$ map of manifolds and the induced map $Tg:\; T\mathcal L\rightarrow TM$ is continuous. This means that the fiber restriction $T_xg: \; T_x \mathcal L\rightarrow T_{g(x)}M$ depends continuously on $x\in \mathcal L$.

For instance the restriction of $\pi_0$ to $\mathcal L^u$ is of class $C^1$. The \emph{tangent space} $T_x\mathcal L$ of the lamination at $x\in \mathcal L$ is the tangent space of the plaque at $x$.

Let $\mathcal L'$ be a lamination embedded into $M$. The lamination $\mathcal L$  is \emph{transverse to $\mathcal L'$ via $g$} if for every $x\in \mathcal L$ such that $g(x)$ belongs to $\mathcal L'$, the following inequality holds:
\[Tg(T_x \mathcal L)+ T_{g(x)}\mathcal L'=T_{g(x)}M\]

The concept of transversality is useful from the following fact:
\begin{Claim} There exists a lamination $\mathcal L\pitchfork_g \mathcal L'$ on $\mathcal L\cap g^{-1}(\mathcal L')$ whose plaques are intersections of $\mathcal L$-plaques with $g$-preimages of $\mathcal L'$-plaques.\end{Claim}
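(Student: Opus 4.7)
My plan is to argue locally: fix an arbitrary $y_0\in \mathcal L\cap g^{-1}(\mathcal L')$ and construct a chart of lamination at $y_0$ whose plaques are as prescribed. First, I choose a chart $\phi_{\mathcal L}\colon U\to \mathbb R^n\times T$ of $\mathcal L$ with $\phi_{\mathcal L}(y_0)=(0,t_0)$, together with $C^1$-coordinates $(\xi,\eta)$ on a neighborhood $V\subset M$ of $g(y_0)$ identifying $V\cong \mathbb R^{n'}\times \mathbb R^{m-n'}$, with $g(y_0)=0$ and $T_{g(y_0)}\mathcal L'=\mathbb R^{n'}\times\{0\}$. Since $\mathcal L'$ is an embedded $C^1$ lamination of dimension $n'$ and its tangent at $g(y_0)$ is horizontal, each of its plaques in $V$ is, after shrinking, a graph $\eta=\gamma_{t'}(\xi)$, where $t'$ ranges over a closed subset $T'\subset \mathbb R^{m-n'}$ (the local transverse parameter) and $\partial_\xi\gamma_{t'}(\xi)$ is jointly continuous in $(\xi,t')$. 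Shrinking $U$, I assume $g(U)\subset V$.

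Writing $g(x,t)=(g_1(x,t),g_2(x,t))$ in these coordinates and setting
\[
F(x,t,t'):=g_2(x,t)-\gamma_{t'}\bigl(g_1(x,t)\bigr),
\]
the condition $g(x,t)\in\mathcal L'$ is equivalent to $F(x,t,t')=0$ for a (necessarily unique) $t'\in T'$. A short calculation shows that transversality of $\mathcal L$ to $\mathcal L'$ via $g$ at $y_0$ translates precisely into surjectivity of $\partial_xF(0,t_0,t'_0)\colon \mathbb R^n\to \mathbb R^{m-n'}$, where $t'_0$ indexes the plaque of $\mathcal L'$ through $g(y_0)$. I then pick a linear splitting $\mathbb R^n=\mathbb R^{n-(m-n')}\oplus \mathbb R^{m-n'}$, writing $x=(x',x'')$, for which $\partial_{x''}F(0,t_0,t'_0)$ is an isomorphism; by joint continuity of $\partial_xF$, this invertibility persists on a neighborhood of $(0,t_0,t'_0)$.

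Next, I run the Banach fixed-point proof of the implicit function theorem plaque by plaque, to solve $F(x',x'',t,t')=0$ as $x''=\psi(x',t,t')$. The parameter $(t,t')$ enters only continuously, but all contraction and size estimates in the classical argument depend solely on $\|\partial_xF\|$ and $\|(\partial_{x''}F)^{-1}\|$, both jointly continuous; consequently $\psi$ is jointly continuous, of class $C^1$ in $x'$, with $\partial_{x'}\psi=-(\partial_{x''}F)^{-1}\partial_{x'}F$ again jointly continuous in $(x',t,t')$. The map
\[
\Phi(x',t,t'):=\phi_{\mathcal L}^{-1}\bigl(x',\psi(x',t,t'),t\bigr)
\]
is then a homeomorphism from a neighborhood of $(0,t_0,t'_0)$ in $\mathbb R^{n-(m-n')}\times T\times T'$ onto a neighborhood of $y_0$ in $\mathcal L\cap g^{-1}(\mathcal L')$, with transverse parameter $(t,t')\in T\times T'$. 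By construction, its slices at fixed $(t,t')$ are exactly the components of intersections of $\mathcal L$-plaques with $g$-preimages of $\mathcal L'$-plaques, so $\Phi$ is a candidate chart of lamination. Compatibility with another such chart is routine: the pair $(t,t')$ depends only on the new plaque (hence is locally constant in $x'$), while the change of leaf coordinates inherits its $C^1$-regularity and jointly continuous derivative from $\psi$.

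The principal difficulty is the third paragraph: in the usual parametric implicit function theorem one assumes $F$ to be $C^1$ jointly in all variables, whereas here only plaque-wise $C^1$-regularity with merely continuous dependence on the transverse parameters is available. The contraction-mapping argument is robust to this weaker hypothesis, which is what makes the implicit function theorem usable in the lamination setting.
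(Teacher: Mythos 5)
Your proof is correct and follows essentially the same route as the paper: both construct local charts for $\mathcal L\cap g^{-1}(\mathcal L')$ by intersecting $\mathcal L$-plaques with $g$-preimages of $\mathcal L'$-plaques and taking the transversal to be (a subset of) the product $T\times T'$ of the original transversals. The paper merely asserts that the plaques $P_{t,t'}$ depend continuously on $(t,t')$ as $C^1$-submanifolds, whereas you supply the missing justification via the parametric implicit function theorem with only continuous dependence on the transverse parameter --- a well-known but worth-spelling-out point that the paper leaves implicit.
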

\begin{proof}
Let us construct a chart of $\mathcal L\pitchfork_g \mathcal L'$ for distinguished open sets which cover $\mathcal L\cap g^{-1}(\mathcal L')$.
Let $x\in \mathcal L\cap g^{-1}(\mathcal L')$, let $\phi:\; U\rightarrow \mathbb R^d \times T$ be a $\mathcal L$-chart of a neighborhood $U$ of $x$ and
let $\phi':\; U\rightarrow \mathbb R^{d'} \times T'$ be a $\mathcal L'$-chart of a neighborhood $U'$ of $g(x)$.

 For each $t\in T$, let $T'(t)$ be the set of $t'\in T'$ s.t. $g\circ \phi^{-1}(\mathbb R^{d}\times \{t\})$ intersects $\phi'^{-1}(\mathbb R^{d'}\times \{t'\})$. Let $P_{t,t'}$ be the $g$-pull back of this intersection. We notice that
$P_{t,t'}$ depends continuously on $(t,t')\in \sqcup_{t\in T} T'(t)$ as a $C^1$-manifold of $M$. By restricting $U$, $P_{t,t'}$ is diffeomorphic to $\mathbb R^{d+d'-n}$, via a map $\phi_{t,t'}:\; P_{t,t'}\rightarrow \mathbb R^{d+d'-n}$ which depends continuously on $t,t'$.
This provides a chart:
\[ U\cap g^{-1}(U')\rightarrow \mathbb R^{d+d'-n}\times \bigsqcup_{t\in T} T'(t)\]
\[x\in P_{t,t'}\rightarrow (\phi_{t,t'}(x), (t,t'))\]
\end{proof}

We remark that $M$ is a lamination formed by a single leaf, so we can use this claim with $g=f^n$, $\mathcal L:= W^s_\epsilon(A_f)$ and $\mathcal L'=M$.

Using transversality we would like to endow $W^s(A_f)=\cup_{n\ge 0}f^{-n}( W^s_\epsilon(A_f)) $ with a structure of lamination, however $f^n$  is not necessarily transverse to $W^s_\epsilon(A_f)$ off $M_f$.

Nevertheless, by (\ref{stc2}), transversality occurs at a neighborhood $U_n$ of $M_f\cap f^{-n}(W^s_\epsilon(A_f))$. This implies the existence of a structure of lamination $\mathcal L^s$ on $U\cap W^s(A_f)$, with $U=\cup_{n\ge 0} U_n$.

By $(T)$, the map $\pi_0$ sends $\mathcal L^u$ transversally to $\mathcal L^s$, since every $\underline x\in \mathcal L^u$ is equal to an iterate  $F^n(\underline y)$ with $\underline y\in W^u_\epsilon(\hat R_f)$ and  $f^n\circ \pi_0$ is equal to  $\pi_0\circ F^n$.

This enables us to define the lamination $\mathcal L_F:= \mathcal L^u \pitchfork_{\pi_0} \mathcal L^s$ supported by $W^u(\hat R_F)\cap W^s(A_F)=M_F\setminus \Omega_F$, and whose leaves are components of the intersections of stable and unstable manifolds.

As the laminations $\mathcal L^u$ and $\mathcal L^s$ are preserved by $F$ and $f$ respectively, it follows that the lamination $\mathcal L_F$ is preserved by $F$.

Therefore the space $M_F$ is stratified by the three following laminations:
\begin{itemize} \item the $0$-dimensional lamination $A_F$ (leaves are points),
\item the $0$-dimensional lamination $\hat R_F$,
\item the lamination $\mathcal L_F$ defined above.
\end{itemize}

\subsection{Conjugacy}
By Proposition 1 of \cite{Quandt-ano}, the hyperbolic continuity theorem holds for the inverse limit of hyperbolic sets. In particular 
\begin{coro}
For $f'$ $C^1$-close to $f$ there exists an embedding $h$ of $A_F \sqcup \hat R_F$ onto $A_{F'} \sqcup \hat R_{F'}\subset M_{F'}$, and such that  $F'\circ h= h\circ F|\hat R_F\sqcup A_F$. Also $h$ is close to the canonical inclusion of $A_F\sqcup \hat R_F$ in $M^\mathbb Z$.\end{coro}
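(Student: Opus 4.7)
The strategy is a direct application of Quandt's hyperbolic continuity theorem (Proposition 1 of \cite{Quandt-ano}), used separately on the two hyperbolic pieces $A_F$ and $\hat R_F$ and then patched together. Both ingredients are in place: $A_f$ is hyperbolic as a basic piece of $\Omega_f$, and by Lemma \ref{lem1} the extended repeller $\hat R_f$ is hyperbolic with a local product structure. Passing to inverse limits, $A_F = A_f^{\mathbb Z}\cap M_F$ and $\hat R_F = \hat R_f^{\mathbb Z}\cap M_F$ are $F$-invariant hyperbolic sets in $M_F$, each inheriting a local product structure.

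Before invoking Quandt's result, I would verify that each piece is an isolated $F$-invariant set in $M_F$ whose isolating neighborhood still captures exactly the hyperbolic continuation when $f$ is perturbed. For $A_F$, the neighborhood $\pi_0^{-1}(V_A)\cap M_F$ serves the purpose, since the defining equality $\bigcap_{n\ge 0} f^n(V_A)=A_f$ lifts directly to the shift and is $C^1$-robust. For $\hat R_F$, one combines the repelling neighborhood $V_R$ in $M_f$ with the unstable neighborhood $V_{\hat R_F}=W^u_\epsilon(\hat R_F)$ furnished by Lemma \ref{unautrelemm}: the sublemma guarantees that any $\underline x\in M_F$ whose zeroth projection is close to $\hat R_f$ has all its past projections close to $\hat R_f$, so $\hat R_F$ is exactly the maximal $F$-invariant subset of this neighborhood; the same description persists for $f'$ close to $f$, by upper semicontinuity of the complement in $M_{f'}$ of the basin of $A_{f'}$.

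With these hypotheses in hand, Quandt's proposition delivers, for each $f'$ sufficiently $C^1$-close to $f$, embeddings $h_A: A_F\hookrightarrow M_{F'}$ with image $A_{F'}$ and $h_R: \hat R_F\hookrightarrow M_{F'}$ with image $\hat R_{F'}$, each $C^0$-close to the canonical inclusion and each conjugating $F$ to $F'$. Since $A_f$ and $\hat R_f$ are disjoint compact subsets of $M$, their inverse limits lie at positive distance in $M^{\mathbb Z}$; shrinking the allowed perturbation if needed, the $C^0$-closeness of $h_A$ and $h_R$ to the inclusion keeps their images disjoint, so that $h:=h_A\sqcup h_R$ is a well-defined embedding of $A_F\sqcup \hat R_F$ onto $A_{F'}\sqcup \hat R_{F'}$, satisfying the required intertwining and proximity to inclusion.

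The main technical point I expect to dwell on is precisely the isolation and persistence hypothesis for $\hat R_F$. Unlike the attractor, $\hat R_f$ is not directly given as an isolated hyperbolic set in the attractor-repeller definition; it is built as $M_f\cap\bigcup_{n\ge 0} f^{-n}(R_f)$, so showing that its inverse limit has a $C^1$-stable isolating neighborhood in $M_F$ is the place where Lemma \ref{unautrelemm} and the attractor-repeller structure are genuinely used, and where one must be careful that the perturbed repeller extension $\hat R_{f'}$ is tracked by the same neighborhood.
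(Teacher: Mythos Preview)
Your approach is correct and is exactly the paper's approach: the corollary is stated immediately after the sentence ``By Proposition 1 of \cite{Quandt-ano}, the hyperbolic continuity theorem holds for the inverse limit of hyperbolic sets,'' and no further argument is given. Your elaboration on isolating neighborhoods for $A_F$ and $\hat R_F$ and the patching $h=h_A\sqcup h_R$ simply spells out what the paper leaves implicit.
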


We are going to extend the conjugacy $h$ to $M_F$.

First we need the following:
\begin{prop}\label{liminvpert}
For $f'$ $C^1$-close to $f$, we have: \[M_{F'}=W^u(\hat R_{F'})\cup A_{F'}.\]
\end{prop}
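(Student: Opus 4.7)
The strategy is to reuse the dichotomy from the unnamed lemma preceding Lemma~\ref{lem1} (``$M_F$ is the union of unstable manifolds of $\Omega_F$''), now applied to $f'$ in place of $f$. Almost all the work is to verify that the ingredients persist under $C^1$-perturbation; once they do, the same $\alpha$-limit argument gives the required decomposition.

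First, I would set up the perturbed attractor-repeller structure. The trapping condition $\bigcap_{n\ge 0}f^n(V_A)=A_f$ is $C^1$-open on $f$, so for $f'$ close to $f$ the neighborhood $V_A$ remains forward-invariant and $A_{f'}:=\bigcap_{n\ge 0} f'^n(V_A)$ is the hyperbolic continuation of $A_f$ given by Proposition~1 of \cite{Quandt-ano}. Applying the same continuity to the compact hyperbolic $f$-invariant set $\hat R_f$ produced in Lemma~\ref{lem1}, one obtains a hyperbolic continuation $\hat R_{f'}\subset M_{f'}$, still equal to $W^s(R_{f'})\cap M_{f'}$, endowed with a local product structure, and for which $W^u_\epsilon(\hat R_{F'})$ is a neighborhood of $\hat R_{f'}$ in $M_{F'}$ (perturbed version of Lemma~\ref{unautrelemm}). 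In particular, shadowing holds on $\hat R_{F'}$.

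Next, I would carry out the dichotomy for a given $\underline x=(x_n)_n\in M_{F'}$. If $\alpha(\underline x)\cap A_{F'}\neq\emptyset$, a subsequence $x_{-n_k}$ approaches $A_{f'}$, so eventually $x_{-n_k}\in V_A$ and $x_0=f'^{n_k}(x_{-n_k})\in f'^{n_k}(V_A)$; passing to the limit gives $x_0\in A_{f'}$, and applying the same argument at every index yields $\underline x\in A_{F'}$. Otherwise $\alpha(\underline x)\subset \Omega_{F'}\setminus A_{F'}=R_{F'}$ by the attractor-repeller structure of $f'$, so $x_{-n}$ eventually stays arbitrarily close to $\hat R_{f'}$; by shadowing in $\hat R_{F'}$ some backward iterate $F'^{-n}(\underline x)$ lies in $W^u_\epsilon(\underline y;F')$ for a $\underline y\in\hat R_{F'}$, whence $\underline x\in F'^n(W^u_\epsilon(\underline y;F'))\subset W^u(\hat R_{F'})$.

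The main obstacle is the first step: verifying that all of the structural tools---hyperbolicity of $\hat R_{f'}$, its local product structure, the coincidence of the abstract hyperbolic continuation with the natural set $W^s(R_{f'})\cap M_{f'}$, and the neighborhood property from Lemma~\ref{unautrelemm}---genuinely transfer from $f$ to $f'$. This transfer rests on the hyperbolic continuity theorem for inverse limits of \cite{Quandt-ano} together with the fact that $\hat R_f$ is a well-behaved hyperbolic set, itself a consequence of the strong transversality condition $(T)$ assumed on $f$ and exploited in Lemma~\ref{lem1}.
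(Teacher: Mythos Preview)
Your argument is sound, but it follows a genuinely different route from the paper's. You first transfer the entire attractor--repeller package to $f'$ (hyperbolic continuations, local product structure, the identity $\Omega_{F'}=R_{F'}\cup A_{F'}$, and the perturbed Lemma~\ref{unautrelemm}), and then rerun the $\alpha$-limit dichotomy that the paper had used only for $f$. The paper instead works directly in the ambient product $M^{\mathbb Z}$: it extends $F,F'$ to maps $\hat F,\hat F'$ of all of $M^{\mathbb Z}$ (applying $f,f'$ coordinatewise), chooses open neighborhoods $V_1,V_2\subset M^{\mathbb Z}$ of $\hat R_F,A_F$, and finds by compactness a single $N$ with $\hat F^{N}(M^{\mathbb Z})\subset \hat F^{-N}(V_1)\cup V_2$. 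This inclusion is $C^0$-open in the map, so it survives for $f'$, whence $M_{F'}\subset \hat F'^{N}(M^{\mathbb Z})\subset \hat F'^{-N}(V_1)\cup V_2$; $F'$-invariance of $M_{F'}$ together with Lemma~\ref{unautrelemm} (for $f'$) then gives the result.

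The trade-off: the paper's covering argument never needs to know that $f'$ is itself axiom~A or that $\Omega_{f'}=R_{f'}\cup A_{f'}$, which your dichotomy step uses when you write $\Omega_{F'}\setminus A_{F'}=R_{F'}$. That equality amounts to $\Omega$-stability (true for attractor--repeller endomorphisms via the filtration/no-cycles argument, but not proved in the paper), and it is exactly the ``main obstacle'' you identify. Your approach is more conceptual and would generalize cleanly to longer filtrations; the paper's is shorter and self-contained because the only perturbed ingredients it needs are the hyperbolic continuation of $\hat R_F,A_F$ and Lemma~\ref{unautrelemm} for $f'$, both already at hand.
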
\begin{proof}

Let $F':= \underline x\in M^\mathbb Z\mapsto \big(f'(x_i)\big)_i$.

As $\hat R_F$ (resp. $A_F$) contains all its stable (resp. unstable) manifolds, the same occurs for $\hat R_{F'}$ and $A_{F'}$.

Let $V_1$ and $V_2$ be small open neighborhoods  in $M^\mathbb Z$ of $\hat R_F$ and $A_F$ respectively. By Lemma \ref{unautrelemm} (applied for $f'$), they satisfy: 
\[\cap_{n\ge 0}   \hat F'^{n}(V_1)\subset W^u_\epsilon(\hat R_{F'})\quad \mathrm{ and} \quad \cap_{n\ge 0} \hat F'^n(V_2)=A_{F'}.\]

As the $\omega$-limit set is included in $R_f\sqcup A_f$, by compactness there exists $N$ large such that $M_F\subset \hat F^{-N}(V_1)\cup V_2$ and
$\hat F^N(M^\mathbb Z)\subset \hat F^{-N}(V_1)\cup V_2$.

Consequently $M_{F'}\subset \hat F'^N(M^\mathbb Z)\subset \hat F'^{-N}(V_1)\cup V_2$, for $f'$ close enough to $f$.

Using the $F'$ invariance of $M_{F'}$, the latter is included in $W^u(\hat R_{F'})\cup A_{F'}$. \end{proof}

For an adapted metric, the open subset $W^u_\epsilon(\hat R_F)$ of $M_F$ satisfies that:
\[cl\Big(F^{-1}\big(W^u_\epsilon(\hat R_F)\big)\Big)\subset W^u_\epsilon(\hat R_F)\; \mathrm{and} \; \cap_{n\ge 0} F^{-n} \big(W^u_\epsilon(\hat R_F)\big)=\hat R_F.\]

Let $D_F:= W^u_\epsilon(\hat R_F)\setminus F^{-1}\big(W^u_\epsilon(\hat R_F)\big)$.


We notice that $\cup_{n\in \mathbb Z} F^{-n}(D_F)= W^u(\hat R_F)\setminus \cap_{n\ge 0} F^{-n}(W^u_\epsilon(\hat R_F))=W^u(\hat R_F)\setminus \hat R_F$. A domain with this last property is called a \emph{fundamental domain for $W^u(\hat R_F)$}.

Let $\partial^- D_F:= F^{-1}\big(cl(D_F)\setminus D_{F}\big)$.

In the last section we are going to prove the following
\begin{lemm} \label{unlemmedeconju}

For $f'$ sufficiently $C^1$ close to $f$, there exists a homeomorphism $h_\#$ from a small open neighborhood $V$ of $cl(D_F)$ into $\mathcal L_{F'}$ such that:
\begin{itemize}
\item[$(i)$] the map $\pi_0\circ h_\#$ is $C^1$-close to $\pi_0$,
\item[$(ii)$] for all $\underline x\in A_F$, $\underline y\in \hat R_F$, $\underline z\in W^s(\underline x;F)\cap W^u(\underline y;F)\cap D_F$, the point $h_\#(\underline z)$ belongs to  $W^s\big(h(\underline x);F'\big)\cap W^u\big(h(\underline y);F'\big)$,
\item[$(iii)$] for every $\underline z\in \partial^- D_{F}$, we have $h_\#\circ F(\underline z)=F'\circ h_\#(\underline z)$.
\end{itemize}
\end{lemm}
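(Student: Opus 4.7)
The plan is to define $h_\#$ pointwise on a neighborhood $V$ of $cl(D_F)$ as a transverse intersection of a perturbed local unstable manifold with a perturbed stable leaf, and then read off the three conditions. First I would observe that $cl(D_F)$ is compact and contained in $\mathcal L_F$: the adapted metric forces $cl(D_F) \subset W^u_\epsilon(\hat R_F) \setminus \hat R_F$, which is disjoint from $A_F$; and by Proposition \ref{liminvpert} combined with the earlier remark that any point whose $\omega$-limit meets $R_f$ must itself lie in $\hat R_F$, each $\underline z \in cl(D_F)$ belongs to $W^u(\hat R_F) \cap W^s(A_F)$. Consequently $\underline z$ determines two continuous assignments: $\underline y(\underline z) \in \hat R_F$, the basepoint of the unique local unstable leaf in $W^u_{\hat\epsilon}(\hat R_F)$ through $\underline z$ (well-defined because $\hat\epsilon > \epsilon$), and $\underline x(\underline z) \in A_F$, the unique attracting orbit with $\underline z \in W^s(\underline x(\underline z); F)$ (unique by expansiveness on $A_F$).

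For $f'$ sufficiently $C^1$-close to $f$, the preceding Corollary supplies $h$ on $A_F \sqcup \hat R_F$, and hyperbolic continuity yields $C^1$-closeness of the perturbed local unstable manifolds $\pi_0 W^u_{\hat\epsilon}(h\underline y; F')$ and stable manifolds $W^s_\epsilon(\pi_0 h\underline x; f')$ to the unperturbed ones. Since $(T)$ is $C^1$-open, these perturbed objects remain transverse. For $\underline z \in V$ I would define $h_\#(\underline z)$ to be the unique point of $W^u_{\hat\epsilon}(h\underline y(\underline z); F')$ close to $\underline z$ whose $\pi_0$-image is the unique transverse nearby intersection of the submanifold $\pi_0 W^u_{\hat\epsilon}(h\underline y(\underline z); F')$ with the relevant iterated $f'^{-n}$-preimage of $W^s_\epsilon(\pi_0 h\underline x(\underline z); f')$; uniqueness of the lift to $W^u_{\hat\epsilon}(h\underline y(\underline z); F')$ follows because $\pi_0$ embeds this local unstable manifold into $M$.

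Properties (i) and (ii) are then almost automatic. Since $\pi_0 h_\#(\underline z)$ is the intersection of two $C^1$-close transverse submanifolds, it is $C^1$-close to $\pi_0 \underline z$, giving (i); by construction $h_\#(\underline z)$ lies on $W^u(h\underline y(\underline z); F')$ and its $\pi_0$-image lies on $W^s(\pi_0 h\underline x(\underline z); f')$, so $h_\#(\underline z) \in W^s(h\underline x(\underline z); F')$, which is (ii). For (iii), take $\underline z \in \partial^- D_F$; then $F\underline z \in cl(D_F) \setminus D_F \subset V$, so $h_\# F(\underline z)$ is defined. The $F$-equivariance of local unstable manifolds of $\hat R_F$ and of stable sets of $A_F$, combined with $h \circ F = F' \circ h$ on $\hat R_F \sqcup A_F$, yields $h\underline y(F\underline z) = F' h\underline y(\underline z)$ and $h\underline x(F\underline z) = F' h\underline x(\underline z)$. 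Both $h_\# F(\underline z)$ and $F' h_\#(\underline z)$ therefore lie on $W^u(F' h\underline y(\underline z); F')$ and $\pi_0$-project onto $W^s(\pi_0 F' h\underline x(\underline z); f')$; by uniqueness of the transverse intersection near $\pi_0 F\underline z$, which both points approximate by the $C^1$-closeness of $F'$ to $F$ and of $h_\#$ to the inclusion, they coincide.

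The main obstacle, in my view, is showing that this pointwise recipe defines a homeomorphism and not merely a map. This requires continuity of $\underline z \mapsto \underline y(\underline z)$ (which follows from the lamination structure $\mathcal L^u$ of Proposition \ref{3.3} combined with the buffer $\hat\epsilon > \epsilon$) and continuity of the $\mathcal L^s$-leaf through $\underline z$ (which uses the lamination $\mathcal L^s$ and the observation that a finite power of $f$ carries the compact set $cl(D_F)$ into a neighborhood of $A_F$ where $\mathcal L^s$ is directly constructed); injectivity is automatic from the uniqueness clauses, and bi-continuity follows by running the same construction symmetrically for $f'$.
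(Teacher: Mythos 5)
There is a genuine gap, and it is precisely the part you call ``almost automatic.'' Your construction defines $h_\#(\underline z)$ as ``the unique transverse nearby intersection'' of the perturbed local unstable manifold with (an iterated preimage of) the perturbed stable manifold. But a transverse intersection of submanifolds of dimensions $u$ and $s$ in $M$ is a submanifold of dimension $u+s-m$, which is positive in general; indeed this is exactly why the whole previous subsection had to build a lamination $\mathcal L_F$ with nontrivial leaves rather than a discrete set. So there is no ``unique nearby point'' in that intersection, and your pointwise recipe for $h_\#(\underline z)$ is ill-posed. The paper resolves this by equipping the immersed lamination $\mathcal L_F|V$ with a tubular neighborhood of $C^1$-disks $\{D_{\underline z}\}$ (Prop.\ 1.5 of \cite{berlam}), of exactly complementary dimension to the leaves, and then setting $i_0(\underline z) := \pi_0(\mathcal L'_{\underline z}) \cap D_{\underline z}$; the disk is what selects a single point.

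Once you introduce such a transversal (or any other device to break the leafwise degeneracy), your argument for $(iii)$ also collapses, and the paper's corresponding work is nontrivial. Both $h_\#(F\underline z)$ and $F' h_\#(\underline z)$ land on the same $\mathcal L_{F'}$-leaf, as you correctly observe, but nothing forces them to be the same \emph{point} of that leaf: the tubular disks $D_{\underline z}$ are \emph{not} $F$-equivariant, so the selection they make need not commute with $F'\circ(\cdot)$. This is why the paper, after constructing $I$, has to explicitly modify it near $\partial^- D_F$: it sets $h_\#=I$ on $F(W)$, $h_\#=F'^{-1}\circ I\circ F$ on $W$, interpolates in between using a cutoff $\rho$ and the exponential map, and then reprojects onto $\mathcal L_{F'}$ along the tubular disks to stay on the correct leaves. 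Your proposal is missing both the transversal that makes the map well-defined and the interpolation step that forces the boundary compatibility $(iii)$.
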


We define $h$ on $\mathcal L_F$ via the following expression:
\[h:\; \underline x\in \mathcal L_F\mapsto F'^{n}\circ h_\#\circ F^{-n}(\underline x),\quad \mathrm{if}\; \underline x\in F^n(D_F),\; n\in\mathbb Z.\]

We notice that for every $\underline x\in \mathcal L_F$, we have:
\[ F' \circ h(\underline x)= h\circ F(\underline x).\]

This expression complements the above definition of $h$ on $\hat R_F$ and $A_F$ as hyperbolic continuation.

It is easy to see that the restriction of $h$ to $\mathcal L_F$ is continuous.
Moreover for every $\underline x\in \hat R_F$, $\underline y\in A_F$, the map $h$ sends $W^s(\underline x; F)\cap W^u(\underline y; F)$ into $W^s(h(\underline x); F')\cap W^u(h(\underline y); F')$. As moreover $\pi_0\circ h$ is $C^1$-close to $\pi_0$, the map $h$ is injective.

To prove that $h$ sends $M_F$ onto $M_{F'}$, we need to prove first the  global continuity of $h$. In order to do so, it remains only to show that the definition of $h$ on $\mathcal L_F$ and the definition of $h$ on $\hat R_F\cup A_F$ fit together continuously.

\paragraph{Proof of the continuity at $\hat R_F$}
Let $(\underline x^n)_{n\ge 0}$ be a sequence of points in $\mathcal L_F$ approaching to $\underline x\in \hat R_F$.
We want to show that $\big(h(\underline x^n)\big)_{n\ge 0}$ approaches $h(\underline x)$.
\
The set $\cup_{\underline x'\in W^s_\epsilon(\underline x;F)} W^u_\epsilon(\underline x',\epsilon)$ is a distinguished neighborhood of $\underline x$. Thus, for $n$ large, there exists $\underline x'^n\in W^s_\epsilon(\underline x;F)$ such that $\underline x^n$ belongs to $W^u_\epsilon(\underline x'^n;F)$. Actually for $n$ large, the point $\underline x^n$ is much closer to $\underline x'^n$ than $\epsilon$. Also $(x'_n)_n$ converges to $\underline x$.

As each $\underline x'^n$ is in $\hat R_F$, for $n $ sufficiently large, the point $h(\underline x^n)$ belongs to $W^u_\epsilon(h(\underline x'^n); F')$. By continuity of $h$ and of the holonomy of $\mathcal L_{F'}$, any limit point $\underline z$ of $\big(h(\underline x^n)\big)_{n\ge 0}$ belongs to $W^u_\epsilon\big(h(\underline x); F'\big)$.

We can do the same proof for the sequence $\big(F^k(\underline x^n)\big)_{n\ge 0}$, from which we get that any limit point of $\big(h\circ F^k(\underline x^n)\big)_{n\ge 0}$ belongs to $W^u_\epsilon\big(h(F ^k(\underline x'));F'\big)$. By using the equality $h\circ F^k(\underline x^n)=F'^k\big(h(\underline x^n)\big)$ and the continuity of $F'$, we note that the iterate $F'^k(\underline z)$ is a limit point of $\big(h\circ F^k(\underline x^n)\big)_n$. Thus $F'^k(\underline z)$ belongs to $W^u_\epsilon\big(h\circ F ^k(\underline x); F'\big)=W^u_\epsilon\big(F'^k(h(\underline x));F'\big)$ for every $k\ge 0$. By expansion along the unstable manifolds, the point $\underline z$ must be $h(\underline x)$.

\paragraph{Proof of the continuity at $A_F$} Let $(\underline x^n)_{n\ge 0}$ be a sequence of $\mathcal L$ approaching to $\underline x\in A_F$. We are going show that $(h(\underline x^n))_{n\ge 0}$ approaches $h(\underline x)$, by the same way as above, but this time we work on $M$.

Indeed, by taking a $\mathcal L^s_F$-distinguished neighborhood, we have that any limit point $\underline z$ of $(h(\underline x^n))_{n\ge 0}$ satisfies that $\pi_{0}(\underline z)$ belongs to $W^s_\epsilon(\pi_0\circ h(\underline x);f')$. The same holds for $\pi_0(F^{-k}(\underline z))=
\pi_{-k}(\underline z)$: it belongs to $W^s_\epsilon(\pi_{-k}\circ h(\underline x);f')$, for every $k\ge 0$. By contraction of the stable manifold, this means that $\pi_{-k}(\underline z)$ is equal to $\pi_{-k}\circ h(\underline x)$ for every $k\ge 0$. In other words, $\underline z$ is equal to  $h(\underline x)$.

\paragraph{Surjectivity of $h$}
The proof is not obvious since $W^u(\hat R_{F'})$ is not always connected and lands in the space $M_F$ which is not necessarily a manifold.

Let us show that the image of  $h$ contains a neighborhood of $\hat R_F$. This implies that the image contains a fundamental domain for $W^u(\hat R_{F'})$ and so by conjugacy that the image of $h$ contains $M_{F'}=W^u(\hat R_{F'})\sqcup A_{F'}$ by Proposition \ref{liminvpert}.

For every $\underline x\in \hat R_F$, the map $h$ sends the local unstable manifold $W^u_\epsilon(\underline x)$ into a subset of $W^u_{\hat \epsilon}\big (h(\underline x)\big)$ which contains $h(\underline x)$. As $h$ is a homeomorphism onto its image, its restriction to this manifold is a homeomorphism onto its image  which is a manifold of same dimension. Thus $h(W^u_\epsilon(\underline x))$ is an open neighborhood of $h(\underline x)$ in $W^u_{\hat \epsilon}\big(h(\underline x)\big)$. By compactness of $\hat R_F$, there exists $\eta>0$ such that for every $\underline x\in \hat R_F$, the open set $h\big(W^u_\epsilon(\underline x)\big)$ contains $W^u_\eta\big(h(\underline x)\big)$. This implies that the image of $h$ contains $W^u_\eta(\hat R_{F'})$ which is a neighborhood of $\hat R_{F'}$.
\subsection{Proof of Lemma $\ref{unlemmedeconju}$}
Let $V$ be a precompact, open neighborhood of $D_F$ in $W^u_{\hat \epsilon}(\hat R_{ F})\setminus \hat R_{ F}\subset \mathcal L_F$. We recall that $\hat \epsilon>\epsilon$. 
\begin{lemm}  There exists $I:\; V\supset D_F\rightarrow \mathcal L_{F'}$ a homeomorphism onto its image such that:
\begin{itemize}
\item For every $\underline z\in V$, the point $I(\underline z)$ belongs to $W^s(h(\underline x);F')\cap W^u  (h(\underline y);F')$ if $\underline z$ belongs to $W^s(\underline x;F)\cap W^u  (\underline y;F)$, with $\underline x\in \hat R_F$ and $\underline y\in A_F$,
\item   the map $i_0:=\pi_0\circ I$ is $C^1$-close to $\pi_0|V$ when $f'$ if close to $f$.
\end{itemize}
\end{lemm}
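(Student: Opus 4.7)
The plan is to construct $I$ by combining the hyperbolic continuations of the two laminations $\mathcal L^u$ and $\mathcal L^s$ and fusing them via the transversality condition $(T)$. First, I would apply the Quandt-style hyperbolic continuity (the same ingredient that produced $h$ on $\hat R_F \sqcup A_F$) to the full unstable lamination $\mathcal L^u$: for $f'$ sufficiently $C^1$-close to $f$, this yields a homeomorphism $h^u:W^u_{\hat\epsilon}(\hat R_F)\to W^u_{\hat\epsilon}(\hat R_{F'})$ extending $h|_{\hat R_F}$ and restricting to each plaque $W^u_{\hat\epsilon}(\underline y;F)$ as a $C^1$-diffeomorphism onto $W^u_{\hat\epsilon}(h(\underline y);F')$ which is $C^1$-close to the inclusion. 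Symmetrically, the stable lamination $\mathcal L^s$ (defined on the transversality neighborhood $U$) admits a continuation $h^s$ sending each leaf $W^s_{loc}(x_0;f)$, $x_0\in A_f$, onto the corresponding leaf $W^s_{loc}(h(x_0);f')$ as a $C^1$-diffeomorphism close to the inclusion.

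Next, I would shrink $V$ so that (i) every $\underline z\in V$ lies on a unique local unstable plaque $W^u_{\hat\epsilon}(\underline y(\underline z);F)$ with $\underline y(\underline z)\in \hat R_F$, and (ii) its projection $z_0=\pi_0(\underline z)$ determines a unique $\mathcal L^s$-leaf $L^s(\underline z)$. By equation (\ref{stc2}), which is implied by $(T)$, the plaque $\pi_0 W^u_{\hat\epsilon}(\underline y;F)$ meets every such $\mathcal L^s$-leaf transversely in $M$; transversality is open, so it persists for $f'$ close to $f$. I then define $I(\underline z)$ as the unique intersection point of $W^u_{\hat\epsilon}(h(\underline y(\underline z));F')$ with $\pi_0^{-1}\bigl(h^s L^s(\underline z)\bigr)$ in a small neighborhood of $\underline z$. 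Since $\pi_0^{-1}(W^s_{loc}(x_0;f'))\cap M_{F'}$ coincides with the local stable manifold of the corresponding orbit in $M_{F'}$, the point $I(\underline z)$ automatically lies in $\mathcal L_{F'}$ and matches the local unstable manifold of $h(\underline y(\underline z))$ with the local stable manifold of the corresponding attractor orbit, as required by the first bullet.

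To verify the remaining properties, continuity of $I$ follows from continuity of the two holonomies $\underline z\mapsto \underline y(\underline z)$ and $\underline z\mapsto L^s(\underline z)$, continuity of $h^u$ and $h^s$, and smoothness of transverse intersection via the implicit function theorem. Injectivity of $I$ reduces to injectivity of $h^u$ on each plaque and injectivity of $h$ on $\hat R_F$; together with the precompactness of $V$ inside $\mathcal L_F$, this yields a homeomorphism onto its image. For the $C^1$-closeness of $i_0:=\pi_0\circ I$, observe that on each unstable plaque the restriction of $I$ coincides with $h^u$, whose composition with $\pi_0$ is uniformly $C^1$-close to $\pi_0$; the transverse stable correction is controlled by $\|h^s-\mathrm{id}\|_{C^0}$, which is small.

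The main obstacle I anticipate is the uniform $C^1$-control of $i_0$ over all of $V$, since the transversal space of $\mathcal L^s$ is generally a Cantor-like subset of $A_f$ and the holonomy $h^s$ has only $C^0$ regularity across transversals. One must therefore invoke the hyperbolic continuity of laminations in its full strength (Quandt's Proposition~1 in \cite{Quandt-ano}), which supplies uniform leafwise $C^1$-smallness together with transverse $C^0$-smallness, and combine it with openness of the transversality $(T)$ to conclude.
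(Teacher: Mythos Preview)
Your overall strategy---continue the unstable and stable laminations to those of $f'$ and intersect them transversally---is the right one, and is what the paper does. But there is a genuine gap in your definition of $I(\underline z)$ as ``the unique intersection point of $W^u_{\hat\epsilon}(h(\underline y(\underline z));F')$ with $\pi_0^{-1}(h^s L^s(\underline z))$ in a small neighborhood of $\underline z$''. That intersection is exactly a $\mathcal L_{F'}$-plaque, and its dimension is $u+s-m$, where $u=\dim E^u|\hat R_f$ and $s=\dim E^s|A_f$. Nothing in the hypotheses forces $u+s=m$; whenever $u+s>m$ (for instance already for a hyperbolic rational map of the sphere, where $u=s=m=2$) the intersection is a positive-dimensional submanifold, and you have not said which of its points is $I(\underline z)$. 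Your later remark that ``on each unstable plaque the restriction of $I$ coincides with $h^u$'' does not rescue this: $h^u(\underline z)$ has no reason to lie on the perturbed stable leaf $h^s L^s(\underline z)$, and sliding it onto that leaf along the $\mathcal L_{F'}$-plaque again requires a choice you have not made.

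The paper supplies precisely the missing device: a \emph{tubular neighborhood} of the immersed lamination $\pi_0(\mathcal L_F|V)$, that is, a continuous family of $C^1$-disks $(D_{\underline z})_{\underline z\in V}$ in $M$, each transverse to the projected leaf $\pi_0(\mathcal L_{\underline z})$ with $D_{\underline z}\cap \pi_0(\mathcal L_{\underline z})=\{\pi_0(\underline z)\}$. One then sets $i_0(\underline z)$ to be the unique point of $D_{\underline z}\cap \pi_0(\mathcal L'_{\underline z})$, where $\mathcal L'_{\underline z}$ is the corresponding perturbed leaf, and lifts via $\pi_0|W^u_{\hat\epsilon}(h(\underline y);F')$ to obtain $I(\underline z)$. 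This single construction also dissolves your stated worry about uniform $C^1$-control across transversals: since both $D_{\underline z}$ and $\pi_0(\mathcal L'_{\underline z})$ vary $C^1$-continuously in $\underline z$ and meet transversely, the implicit function theorem gives $i_0\in C^1$ outright, and $C^1$-closeness of $i_0$ to $\pi_0|V$ follows from the $C^1$-closeness of $\mathcal L'_{\underline z}$ to $\mathcal L_{\underline z}$. No separate leafwise-$C^1$ / transverse-$C^0$ bookkeeping for maps $h^u$, $h^s$ is needed.
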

\begin{proof} Let $N\ge 0$ be such that $F^N(V)$ has its closure in $W^s_\epsilon(A_F)$. Let us first notice that the images by $\pi_0$ of $W^u_{\hat \epsilon} \big(h(\underline x); F'\big)$ and $W^s_\epsilon\big(h(\underline y); F'\big)$ depend continuously on $f'$, $\underline x$ and $\underline y$ for the $C^1$-topologies.

For $\underline z\in V$, let $L_{\underline z}$ be the set of pairs $(\underline x,\underline y)\in \hat R_F\times A_F$ such that
$\underline z$ belongs to $W^u_{\hat \epsilon}(\underline x; F)\cap F^{-N}\big(W^s_{ \epsilon}(\underline x; F)\big)$. Put:
\[\mathcal L_z:= \bigcup_{(\underline x,\underline y)\in L_{\underline z}}W^u_{\hat \epsilon}(\underline x; F)\cap F^{-N}\big(W^s_{ \epsilon}(\underline y; F)\big)\quad \mathrm{and}\quad
\mathcal L'_z:= \bigcup_{(\underline x,\underline y)\in L_{\underline z}}W^u_{\hat \epsilon}(h(\underline x); F')\cap F'^{-N}\big(W^s_{ \epsilon}(h(\underline y); F')\big)\]
We remark that $\mathcal L'_z$ and $\mathcal L_z$ are manifolds, and $\mathcal L_z$ contains the $\mathcal L_F|V$-leaf of $\underline z$.
  
To accomplish the proof of the lemma, we endow the lamination $\mathcal L_F| V$ immersed by $\pi_0$ with a tubular neighborhood, that is a family of $C^1$-disks $(D_{\underline z'})_{\underline z'\in \mathcal L_F}$ embedded into $M$ such that:
\begin{itemize}
\item $D_{\underline z}$ is transverse to $\pi_0 (\mathcal L_{\underline z})$ and satisfies $D_{\underline z}\pitchfork_{\pi_0} \mathcal L_{\underline z}=\{\underline z\}$,
\item the disks of each small $\mathcal L_F$-plaque form the leaves of a $C^1$-foliation of an open subset of $M$,
\item these foliations depend  $C^1$-continuously transversally to $\mathcal L_F$.
\end{itemize}
By \cite{berlam}, Prop. 1.5,  any $C^1$-immersed lamination has a tubular neighborhood.

For $f'$ sufficiently close to $f$, the submanifold $\pi_0(\mathcal L'_{\underline z})$ intersects $D_{\underline z}$ at a unique point $i_0(\underline z)$, for every $\underline z\in V$.  By transversality, the map $i_0:\; V\rightarrow M$ is of class $C^1$.

We put $I(\underline z):= \pi_0^{-1}\big(i_0(\underline z)\big)\cap W^u_{\hat \epsilon}\big(h(\underline x);F'\big)$, with $\underline z\in W^u_{\hat \epsilon}(\underline x;F)$. Such a map satisfies the required properties.
\end{proof}

Let $W$ be a small neighborhood of $\partial^- D_F$ such that the closures of $W$ and $F(W)$ are disjoint and included in $V$.

Let us modify $I$ to a map $h_\#$ which satisfies moreover that for every $\underline z\in W$, $h_\#\circ F(\underline z)= F'\circ h_\#(\underline z)$.

 We define $h_\# $ on $F(W)$ as equal to $I$ and on $W$ as equal to  $h_1:=F'^{-1}\circ I\circ F$.

Between,  $h_\#$ will be such that it respects the lamination $\mathcal L_{F'}$ and remains $C^1$ close  to $I$.

For this end, let us  define a map $h_2:\; V\rightarrow M$ equal to $i_0$ on $F(W)$ and to $\pi_0\circ h_1$ on $W$.

Take  a $C^1$-function $\rho$ equal to $1$ on $W$ with support in a small neighborhood $\hat W$ of $W$ (disjoint from  $F(\hat W)$) in $V$.
Let $\exp$ be the exponential map  associated to a Riemannian metric of $M$.

Put:
 \[h_2:\; \underline z\in V\mapsto \left\{ \begin{array}{cl} \exp_{i_0(\underline z)} \Big[ \rho(\underline z)\cdot \exp^{-1}_{i_0(\underline z)} \big(\pi_0\circ h_1(\underline z)\big) \Big]& \mathrm{if}\; \underline z\in \hat W,\\
  i_0(\underline z) & \mathrm{otherwise.}\end{array}\right.\]
  The map $h_2$ is of class $C^1$ as composition of $C^1$-maps. Moreover it is $C^1$-close to $\pi_0$ since $\pi_0 \circ h_1$ and $i_0$ are $C^1$-close to $\pi_0$. In particular, for $f'$ close to $f$, $h_2$ is an immersion of the lamination $\mathcal L_F|V$.

We notice that $h_2$ sends $\mathcal L_F$ plaques included in $W\cup F(W)$ into the $\pi_0$-image of $\mathcal L_{F'}$-plaques.

In order to construct the map $h_\#:D_F\rightarrow \mathcal L_{F'}$ from $h_2$, we take a tubular neighborhood $(D_{\underline z})_{\underline z\in V}$ of $\mathcal L_F$ (see the definition in the proof of the above lemma).

For $\underline z\in V$, the point $h_2(\underline z)$ is close to $\pi_0(\underline z)$ and so belongs to a unique disk $D_{\underline z'}$ with ${\underline z'}\in \mathcal L_{\underline z}$. Also $\pi_0(\mathcal L'_{\underline z})$ intersects $D_{\underline z'}$ at a unique point. Let $h_\#(\underline z)$ be the preimage of this point by $\pi_0|\mathcal L'_{\underline z}$.


We note that $h_\#$ sends each $\mathcal L_F$-plaque included in $V$ into a $\mathcal L_{F'}$-plaque. By smoothness of the holonomy between two transverse sections of a $C^1$-foliation, the map $\pi_0\circ h_\#$ is of class $C^1$. This concludes the proof of Lemma \ref{unlemmedeconju}.
\begin{flushright}
$\square$
\end{flushright}

 \bibliographystyle{alpha}
\bibliography{references}

\end{document}